\documentclass[11pt,a4paper,reqno]{amsart}
\usepackage{cmap}
\usepackage[T1]{fontenc}
\setlength{\parskip}{0.25em}

\usepackage{graphicx}
\usepackage{subcaption}
\usepackage{setspace,mathrsfs,amsthm,amsmath,amssymb,amsfonts,lipsum,appendix,mathtools,cite,tabularx,fontenc,bbm,caption}

\usepackage[shortlabels]{enumitem}
\makeatletter
\newtheorem*{rep@theorem}{\rep@title}
\newcommand{\newreptheorem}[2]{%
	\newenvironment{rep#1}[1]{%
		\def\rep@title{#2 \ref{##1}}%
		\begin{rep@theorem}}%
		{\end{rep@theorem}}}
\makeatother

\newtheorem{theorem}{Theorem}[section]
%\newreptheorem{theorem}{Theorem}

\newtheorem{proposition}[theorem]{Proposition}
\theoremstyle{definition}
\newtheorem{remark}[theorem]{Remark}
\newtheorem{definition}[theorem]{Definition}

%\numberwithin{equation}{section}

%\theoremstyle{definition}
%\newtheorem{definition}[theorem]{Definition}

%\theoremstyle{remark}
%\newtheorem{remark}[theorem]{Remark}
\usepackage[nointegrals]{wasysym}
\usepackage[misc]{ifsym}
\usepackage[dvipsnames]{xcolor}
\definecolor{ao}{rgb}{0.0, 0.5, 0.0}
\definecolor{lasallegreen}{rgb}{0.0, 0.3, 0.0}
\usepackage{hyperref,tikz}
\hypersetup{
	colorlinks=true,
	linkcolor=blue,
	filecolor=magenta,      
	urlcolor=violet,
	citecolor=OrangeRed,
}
\usepackage[margin=0.75in]{geometry}
\usetikzlibrary{calc,patterns,angles,quotes,intersections}

\captionsetup{font=small}
\usepackage{wrapfig}

%%%%%%%%my preamp%%%%%%%%%%%%

\let\oldnorm\norm
\def\norm{\@ifstar{\oldnorm}{\oldnorm*}}

\newcommand{\Om} {\Omega}

\newcommand{\Lom} {\mathcal{L}}

\newcommand{\ol}[1] {\overline{#1}}

\newcommand{\dw}{{\,\rm d}(w)}
\newcommand{\dz}{{\,\rm d}(z)}

\newcommand{\G}{{\mathcal G}}
\newcommand{\disk}{{\mathbb D}}
\newcommand{\s}{{\mathbb{S}}}

\newcommand{\R}{{\mathbb R}}

\newcommand{\ra} {\rightarrow}

\newcommand{\lb} {\langle}
\newcommand{\rb} {\rangle}

\def\H{{\mathcal H}}

\usepackage[hyperpageref]{backref}

\usepackage{orcidlink}

%Just another size of margins
\usepackage{a4wide}
\setlength{\parskip}{0.4em}

\usepackage{csquotes}

\usepackage{amsfonts}
\usepackage{amssymb,amsthm}
\usepackage{amsmath}
%To hide unused eqref-numbers
\allowdisplaybreaks
\usepackage{mathtools}
\mathtoolsset{showonlyrefs}

\numberwithin{equation}{section}

\usepackage {setspace}

\usepackage{enumitem}

%To remove the vertical space altogether in all lists:
\setlist{nosep}

%Todo
\setlength {\marginparwidth }{2cm}
\usepackage{todonotes}
%Todo

%For comments:

%

\begin{document}
\singlespacing

\title[On Eigenvalues of Logarithmic Potential Operator in the Hyperbolic Space]{On Eigenvalues of Logarithmic Potential Operator in the Hyperbolic Space}

\author[Jiya Rose Johnson]{Jiya Rose Johnson}
\author[Sheela Verma]{Sheela Verma}

\address[Jiya Rose Johnson]{\newline\indent
	Department of Mathematics,
	Indian Institute of Technology Madras 
	\newline\indent
	Chennai 36, India
	%\orcidlink{0000-0002-2470-9140} 0000-0002-2470-9140 
}
\email{jiyarosejohnson@gmail.com}
\address[Sheela Verma]{\newline\indent
	Department of Mathematical Sciences,                                  Indian Institute of Technology (BHU) Varanasi 
	\newline\indent
	Uttar Pradesh 221005
	\newline\indent
	\orcidlink{0000-0001-6443-7736} 0000-0001-6443-7736 
}
\email{sheela.mat@iitbhu.ac.in}

% \address[V.~Bobkov]{\newline\indent
% 	Institute of Mathematics, Ufa Federal Research Centre, RAS,
% 	\newline\indent 
% 	Chernyshevsky str. 112, 450008 Ufa, Russia
% 	\newline\indent 
% 	Ufa University of Science and Technology,
% 	\newline\indent
% 	Zaki Validi str. 32, 450076 Ufa, Russia
% 	\newline\indent
% 	\orcidlink{0000-0002-4425-0218} 0000-0002-4425-0218
% }
% \email{bobkov@matem.anrb.ru}

% \address[M.~Ghosh]{\newline\indent
% 	Tata Institute of Fundamental Research,
%  \newline\indent
% 	Centre for Applicable Mathematics, 
% 	\newline\indent
% 	Sharadanagar, Bengaluru 560065, India
% 	\newline\indent
% 	\orcidlink{0000-0003-0415-2821} 0000-0003-0415-2821 
% }
% \email{ghoshmrityunjoy22@gmail.com}

%\author{}
%\address{To be written}

%\curraddr{Department of Mathematics and Statistics,
%Case Western Reserve University, Cleveland, Ohio 43403}
%\email{xyz@math.university.edu}

%\author{Author Two}
%\address{To be written}
%\email{two@maths.univ.edu.au}

%\date{January 1, 2001, and, in revised form, June 22, 2001.}

%\dedicatory{This paper is dedicated to our advisors.}

\subjclass[2020]{
    % 35P05, %General topics in linear spectral theory for PDEs 
    % 35P15, %Estimates of eigenvalues in the context of PDEs
    47G40, %(2010–now)Potential operators [See also 31-XX]
    47A75 %(1991–now)Eigenvalue problems for linear operators [See also 47J10, 49R05]
    % %35J05, %(1973–now)Laplace operator, Helmholtz equation (reduced wave equation), Poisson equation [See also 31Axx, 31Bxx] % 31B10, %(1973–now)Integral representations, integral operators, integral equations methods in higher dimensions
    % 81Q10 %(1991–now)Selfadjoint operator theory in quantum theory, including spectral analysis
    % 47A55 %(1973–now)Perturbation theory of linear operators [See also 47H14, 58J37, 70H09, 81Q15]
   }
\keywords{Logarithmic potential operator, Riemannian manifolds, Poincar\'e hyperbolic disk, Reverse Faber-Krahn inequalities}
%\thanks{\textit{Corresponding author}. ..... {...@....}}

\begin{abstract}
Let $\Omega$ be a bounded open set in the Poincar\'e hyperbolic disk, $\disk$. In this article, we consider the hyperbolic logarithmic potential operator 
$\mathcal{L}_h : L^2(\Omega) \to L^2(\Omega)$, defined by
\begin{equation*}
    \mathcal{L}_h u(z)=\frac{1}{2}\int_\Omega \log\frac{1}{[z,w]}\,u(w)\, \dw,
\end{equation*}
and the associated eigenvalue problem on $\Om$
\begin{equation}
    \mathcal{L}_h u=\tau u.
\end{equation}
We first extend the notion of polarization with respect to hyperplanes in the Poincar\'e disk and prove the associated properties. Then we establish a reverse Faber-Krahn inequality for the largest eigenvalue, $\tau_{h}$ of $\Lom_h$, under polarization. Further, we provide a representation formula for the eigenfunctions of $\mathcal{L}_h$. In addition, we show that the operator $\mathcal{L}_h$ is a positive operator on $L^2(\Om)$.
\end{abstract}

\maketitle
\definecolor{lblack}{gray}{0.3}
\definecolor{mygray}{gray}{0.9}
\definecolor{vlgray}{gray}{0.96}
\definecolor{medgray}{gray}{0.8}
\definecolor{dgray}{gray}{0.7}
%Table of contents
\begin{quote}	
	\setcounter{tocdepth}{1}
	\tableofcontents
	\addtocontents{toc}{\vspace*{0ex}}
\end{quote}
 \section{Introduction}

For a bounded open set $\Omega \subset \mathbb{R}^2$, the logarithmic potential operator 
$\mathcal{L} : L^2(\Omega) \to L^2(\Omega)$ is defined by
\begin{equation}
    \mathcal{L} u(x)
    = \frac{1}{2\pi} \int_\Omega \log \frac{1}{|x-y|}\, u(y)\, dy.
\end{equation}
The operator $\mathcal{L}$ is self-adjoint and compact on $L^2(\Omega)$\cite[p.367]{troutman1967}, and hence admits infinitely many eigenvalues. 
The associated eigenvalue problem
\begin{equation}\label{oldevproblem}
    \mathcal{L} u = \tau u
\end{equation}
has been extensively studied; see 
\cite{anoopjiya2025, troutman1967, troutman1969, suraganlog2016, Anderson1992, Alsenafi_etal_2024, Kac1970}. In \cite{anoopjiya2025}, reverse Faber-Krahn inequalities were established for the largest eigenvalue under polarization and Schwarz symmetrization. The existence of a unique negative eigenvalue for \eqref{oldevproblem} was characterized in \cite{Kac1970, troutman1967} in terms of a geometric quantity known as the transfinite diameter. Moreover, in \cite{troutman1967}, the author gave an explicit representation for eigenfunctions of \eqref{oldevproblem}. The eigenvalues of \eqref{oldevproblem} on the disks were also analyzed in terms of the zeros of the Bessel functions in \cite{Anderson1992, suraganvolume}.  In this paper, we consider analogues operator in the hyperbolic space and explore the above results in the Poincar\'e disk model 
$\mathbb{D} \subset \mathbb{C}$.

We recall the following definitions from \cite[Chapter 2]{Stoll_2016}. The hyperbolic arclength element on the Poincar\'e disk $\mathbb{D}$ is given by
\begin{equation}
    ds = \frac{2\,|dz|}{1 - |z|^2},
\end{equation}
where $z=x+iy$ , $|z|^2=x^2+y^2$, and $|dz|=\sqrt{dx^2+dy^2}$. The hyperbolic length of a $C^{1}$ curve $\gamma : [0,1] \to \mathbb{D}$ is given by
\[
L(\gamma) = \int_{0}^{1} \frac{2\,|\gamma'(t)|}{1 - |\gamma(t)|^{2}}\, dt.
\]
For points $z,w \in \mathbb{D}$, the hyperbolic distance $d_h(z,w)$ is defined by
\begin{equation}
    d_h(z,w) = \inf_{\gamma \in \mathcal{C}_{z,w}} L(\gamma),
\end{equation}
where
\[
\mathcal{C}_{z,w} = \Big\{ \gamma : [0,1] \to \mathbb{D} \,\Big|\, 
\gamma \text{ is } C^1,\ \gamma(0)=z,\ \gamma(1)=w \Big\}.
\]
It is well known that, for any $z,w \in \mathbb{D}$,
\begin{equation}
    d_h(z,w)= 2 \tanh^{-1} \left| \frac{z-w}{1-\overline{z}w} \right|.
\end{equation}
Let $\Omega \subset \mathbb{D}$ be an open set, bounded with respect to the hyperbolic metric $d_h$. Then
\begin{equation}
    L^{2}(\Omega)
    = \left\{ f \text{ measurable on } \Omega : \int_{\Omega} |f(w)|^{2}\, \dw < \infty \right\},
\end{equation}
where
\begin{equation}
   \dw
    = \frac{1}{\pi}\,\frac{dA(w)}{(1-|w|^{2})^{2}},
\end{equation}
and $dA$ denotes the Lebesgue area measure on $\mathbb{D}$. 
We also recall the pseudo-hyperbolic metric,
\begin{equation}
    [z,w] = \left| \frac{z-w}{1-\overline{z}w} \right|.
\end{equation}
Using the pseudo-hyperbolic distance and the above definitions, we define the following logarithmic potential operator in the hyperbolic space $\disk$. 
\begin{definition}\label{def_log_hyper}{\textbf{(The hyperbolic logarithmic potential operator).}}
    Let $\Omega \subset \mathbb{D}$ be an open set, bounded with respect to the hyperbolic metric $d_h$. The hyperbolic logarithmic operator
    $\mathcal{L}_h : L^2(\Omega) \to L^2(\Omega)$ is defined by
    \begin{equation*}
    \mathcal{L}_h (u)(z)
    = \frac{1}{2}\int_{\Omega} \log\frac{1}{[z,w]} u(w)\, \dw,
    \qquad z \in \Omega.
    \end{equation*}
\end{definition}

% where 
% \begin{equation}
%     \dw = \frac{1}{\pi} \frac{dA(w)}{(1-|w|^2)^2},\quad dA \text{ denotes the area measure in $\mathbb{D}$}.
% \end{equation}

 Similar to the fundamental solution of the usual Laplacian in $\R^2$, the fundamental solution $g_h$ of the hyperbolic Laplacian $\Delta_h$ on the unit disk $\disk$\cite[Sections 3.1 \& 3.2]{Stoll_2016} is given by 
\begin{equation*}
    g_h(|x|) = \frac{1}{2} \log \frac{1}{|x|}.
\end{equation*}
The solution to the Poisson equation $-\Delta u = f$ in $\mathbb{R}^2$ can be expressed as the convolution of $f$ with $g(x)=\frac{1}{2\pi}\log \tfrac{1}{|x|}$, the fundamental solution of Laplacian in $\R^2$. More precisely,
\begin{equation}
u(x) = (g * f)(x)
= \frac{1}{2\pi} \int_{\mathbb{R}^2} \log \frac{1}{|x-y|}\, f(y)\, dy,
\end{equation}
and $u$ satisfies $-\Delta u = f$ in $\mathbb{R}^2$ in the distributional sense. For a fixed $x \in \mathbb{R}^2$, the translation map $y \mapsto y - x$ carries $\mathbb{R}^2$ onto itself. In analogy with this, the map
\begin{equation}\label{eqn_phi_z_w}
    \phi_z(w) = \frac{z - w}{1 - \overline{z}\, w}, \qquad z \in \mathbb{D},
\end{equation}
maps the unit disk $\mathbb{D}$ to itself. This leads to the definition of the invariant convolution of measurable functions $f$ and $g$ on $\mathbb{D}$\cite[Section 3.4]{Stoll_2016} by
\[
    (f \star g)(z) = \int_{\mathbb{D}} f(w)\, g(\phi_z(w))\, dw,
\]
whenever the integral is well defined. Under suitable assumptions on $f$, the convolution $f \star g_h$ yields a solution of the Poisson equation $-\Delta_h u = f$ in the unit disc; see \cite[Lemma~6.9, p.~67]{Stoll_1994}. For bounded open subsets of $\R^2$, the logarithmic potential $\mathcal{L}u$ is defined as the convolution of $u$ with the fundamental solution $g$. Analogously, on the Poincar\'e disk $\disk$, the hyperbolic logarithmic potential operator is defined by convolving $u$ with the fundamental solution $g_h$ using the invariant convolution mentioned above. This naturally motivates the use of the pseudo-hyperbolic metric rather than the standard hyperbolic metric, since $|\phi_z(w)|=[z,w]$.

\par The authors of \cite{suraganfirstsecond} studied Riesz transforms on Riemannian manifolds. They proved a Rayleigh-Faber-Krahn inequality for the largest eigenvalue and a Hong-Krahn-Szeg\"{o} inequality for the second largest eigenvalue of the Riesz transform.
 The motivation to study the operator $\Lom_h$ is inspired by \cite{suraganfirstsecond} and from the discussion presented in \cite[Chapter 3]{Stoll_2016}. We study the eigenvalue problem 
\begin{equation}\label{evproblem}
    \Lom_h u=\tau u,
\end{equation}
similar to \eqref{oldevproblem}, in the hyperbolic space. We say $(\tau,u)$ is an eigenpair for \eqref{evproblem}, if it satisfies the weak formulation:
\begin{equation}
    \lb \Lom_h u,v \rb=\tau \lb u,v\rb,\quad \forall\, v\in L^2(\Om),
\end{equation}
where $\lb \cdot ,\cdot\rb$ denotes the usual inner product defined in $L^2(\Om)$.  More precisely, $(\tau,u)$ is an eigenpair if it satisfies the equation:
\begin{equation}\label{eulerlagrange}
         \frac{1}{2} \iint\limits_{\Om\;\Om} \log \frac{1}{[z,w]}u(z)v(w)\dz\dw=\tau\int_\Om u(z)v(z)\dz,\quad \forall\, v\in L^2(\Om).
\end{equation}
The largest eigenvalue denoted by $\tau_h(\Om)$, of $\Lom_h$ (of \eqref{evproblem}) has the following variational characterization:
\begin{equation}\label{weightedcharmu1}
        \tau_h(\Om):=\sup\left\{ \frac{1}{2} \iint\limits_{\Om\;\Om} \log \frac{1}{[z,w]}u(z)u(w)\dz\dw: u\in L^2(\Om) \text{ and } \int_\Om u(z)^2\dz=1 \right\}.
\end{equation}

In \cite{Anoop-Ashok2023}, the authors established Faber-Krahn type inequalities for the first eigenvalue of the $p$-Laplacian with mixed boundary conditions under polarization. In the Euclidean setting, the polarizer $H$ is an affine half-space, that is, one side of a straight line in $\mathbb{R}^2$. The polarization $P_H(\Omega)$ of a bounded open set $\Omega$ with respect to $H$ is defined by
\begin{equation*}
P_H(\Omega)
= \bigl[(\Omega \cup \sigma_H(\Omega)) \cap H\bigr]
\cup \bigl[\Omega \cap \sigma_H(\Omega)\bigr],
\end{equation*}
where $\sigma_H$ is the reflection with respect to the boundary of $H$. The authors in \cite{anoopjiya2025} established the reverse Faber–Krahn inequality for the largest eigenvalue $\tau_1$ of \eqref{oldevproblem} under polarization: 
\begin{equation*}
         \tau_1(\Omega)\leq \tau_1(P_H(\Omega)).
    \end{equation*}
% and Schwarz symmetrization:
% \begin{equation}
% \tau_h(\Omega) \leq \tau_h(\Omega^*).
% \end{equation}

In the present work, we investigate the reverse Faber-Krahn inequality for the largest eigenvalue $\tau_h$ of \eqref{evproblem}, under polarization. The first step in our analysis is to introduce polarization in the hyperbolic space. In the hyperbolic space, straight lines are replaced by geodesics, and an affine half-space $\mathcal{H}$ (polarizer) corresponds to one side of a geodesic $\mathcal{G}$. For the next theorem, we extend the notion of polarization to the Poincar\'e disk model. Since this definition is not straightforward, it is presented in detail in Section~\ref{sec_def_pol}. To the best of our knowledge, this notion of polarization in hyperbolic space does not appear in the existing literature. Within this framework, we obtain the following theorem, which extends \cite[Theorem~1.6]{anoopjiya2025}.
% \begin{notation}
% The symbol $\cong$ denotes equality up to sets of Lebesgue measure zero. That is, for measurable sets $A$ and $B$, we write $A \cong B$ if
% \[
% |A \triangle B| = 0,
% \]
% where $\triangle$ denotes the symmetric difference of sets and $|\cdot|$ denotes the Lebesgue area measure.
% \end{notation}

\begin{theorem}\label{thm_hyperbolicfk}
Let $\Omega$ be an open bounded subset of Poincar\'e hyperbolic disk $\mathbb{D}$. Let $\mathcal{G}$ be a geodesic in $\mathbb{D}$ and let $\mathcal{H}$ be an associated polarizer. Then
\begin{equation}\label{pollog}
    \tau_h(\Omega) \le \tau_h\bigl(P_{\mathcal{H}}(\Omega)\bigr).
\end{equation}
Moreover, equality in \eqref{pollog} holds if and only if
\[
|P_{\mathcal{H}}(\Omega) \triangle \Omega|=0
\quad \text{or} \quad
|P_{\mathcal{H}}(\Omega) \triangle \sigma_{\mathcal{G}}(\Omega)|=0,
\]
where $\triangle$ denotes the symmetric difference of sets and $|\cdot|$ denotes the Lebesgue area measure.
\end{theorem}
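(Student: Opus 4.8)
The plan is to work directly from the variational characterization \eqref{weightedcharmu1} and to show that polarizing a maximizer cannot decrease the associated bilinear form. Writing $K(z,w)=\log\frac{1}{[z,w]}$, observe that $K$ is a \emph{positive} kernel on $\Omega\times\Omega$ since $[z,w]\in[0,1)$. Hence replacing any admissible $u$ by $|u|$ leaves $\int_\Omega u^2\,dz$ unchanged and does not decrease the double integral in \eqref{weightedcharmu1}, so the supremum is attained at some $u\ge 0$; I extend it by zero to all of $\mathbb{D}$. Writing $\sigma=\sigma_{\mathcal{G}}$ for the reflection across $\mathcal{G}$ and $z^{*}=\sigma(z)$, I define the polarized function by $u_{\mathcal{H}}(z)=\max\{u(z),u(z^{*})\}$ on $\mathcal{H}$ and $u_{\mathcal{H}}(z)=\min\{u(z),u(z^{*})\}$ off $\mathcal{H}$. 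Because $\sigma$ is a hyperbolic isometry it preserves the measure, so the two values are merely interchanged across $\mathcal{G}$ and $\int u_{\mathcal{H}}^{2}\,dz=\int u^{2}\,dz=1$; moreover $\operatorname{supp}u\subseteq\Omega$ forces $\operatorname{supp}u_{\mathcal{H}}\subseteq P_{\mathcal{H}}(\Omega)$, so $u_{\mathcal{H}}$ is admissible for $\tau_h(P_{\mathcal{H}}(\Omega))$.

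The geometric core, which I would prove from the properties of $\sigma_{\mathcal{G}}$ established in Section~\ref{sec_def_pol}, is the kernel inequality
\begin{equation*}
[z,w]\le[\,z^{*},w\,],\qquad\text{equivalently}\qquad K(z,w)\ge K(z^{*},w),\qquad z,w\in\mathcal{H},
\end{equation*}
strict unless $z$ or $w$ lies on $\mathcal{G}$. Since $\sigma$ is an anticonformal isometry of $\mathbb{D}$ the pseudo-hyperbolic metric is invariant, giving $[z^{*},w^{*}]=[z,w]$ and $[z^{*},w]=[z,w^{*}]$; the displayed inequality then expresses that reflecting one point of a same-side pair to the far side of $\mathcal{G}$ strictly increases the (pseudo-)hyperbolic distance, the hyperbolic analogue of the elementary Euclidean fact. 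Establishing this, together with the correct notion of polarizer, is where the hyperbolic geometry enters and is the main obstacle.

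With this in hand I would run the standard two-point rearrangement. Splitting $\iint_{\mathbb{D}\times\mathbb{D}}K(z,w)u(z)u(w)\,dz\,dw$ over the four products of $\mathcal{H}$ and $\mathbb{D}\setminus\mathcal{H}$ and using the substitutions $z\mapsto z^{*}$, $w\mapsto w^{*}$ (which preserve both the measure and $K$), the whole integral is recast over $\mathcal{H}\times\mathcal{H}$. Abbreviating $a=u(z),\,A=u(z^{*}),\,b=u(w),\,B=u(w^{*})$, $K=K(z,w)$, $L=K(z^{*},w)$, the integrand for $u$ equals $K(ab+AB)+L(aB+Ab)$, while that for $u_{\mathcal{H}}$ equals $K\max\{P,Q\}+L\min\{P,Q\}$ with $P=ab+AB$ and $Q=aB+Ab$. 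Because $K\ge L\ge 0$, a one-line rearrangement gives
\begin{equation*}
K\max\{P,Q\}+L\min\{P,Q\}-\bigl(KP+LQ\bigr)=(K-L)\,(Q-P)^{+}\ge 0,
\end{equation*}
whence $\iint K\,u\,u\le\iint K\,u_{\mathcal{H}}u_{\mathcal{H}}$, and \eqref{weightedcharmu1} applied on $P_{\mathcal{H}}(\Omega)$ yields $\tau_h(\Omega)\le\tau_h(P_{\mathcal{H}}(\Omega))$.

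For the equality statement, equality forces $(K-L)(Q-P)^{+}=0$ for a.e.\ $(z,w)\in\mathcal{H}\times\mathcal{H}$. By strictness of the kernel lemma, $K>L$ off a null set, so $Q\le P$ a.e., i.e.\ $\bigl(u(z)-u(z^{*})\bigr)\bigl(u(w)-u(w^{*})\bigr)\ge 0$ for a.e.\ $z,w\in\mathcal{H}$. Thus $z\mapsto u(z)-u(z^{*})$ has an a.e.\ constant sign on $\mathcal{H}$: if it is $\ge 0$ then $u=u_{\mathcal{H}}$ a.e., and if it is $\le 0$ then $u=u_{\mathcal{H}}\circ\sigma$ a.e. Using that the maximizing eigenfunction is positive on $\Omega$ (its support equals $\Omega$ up to a null set, which I would draw from the positivity of $\mathcal{L}_h$ proved in the paper), these two alternatives translate precisely into $|P_{\mathcal{H}}(\Omega)\triangle\Omega|=0$ and $|P_{\mathcal{H}}(\Omega)\triangle\sigma_{\mathcal{G}}(\Omega)|=0$. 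The converse is immediate: since $\sigma_{\mathcal{G}}$ is an isometry, $\tau_h(\sigma_{\mathcal{G}}(\Omega))=\tau_h(\Omega)$, and either condition makes the polarized domain agree up to measure zero with $\Omega$ or with its reflection. Beyond the geometric lemma, the delicate point here is precisely the a.e.\ positivity of the eigenfunction needed to pass from $u=u_{\mathcal{H}}$ to the set identity.
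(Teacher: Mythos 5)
Your proposal is correct and follows essentially the same route as the paper: a two-point rearrangement over $\mathcal{H}\times\mathcal{H}$ driven by the reflection inequality $[z,w]<[z,\sigma_{\mathcal{G}}(w)]$ for same-side points (Propositions~\ref{prop_reflection} and~\ref{prop_riezpol}), yielding the same equality dichotomy $P_{\mathcal{H}}(u_1)=u_1$ a.e.\ or $P_{\mathcal{H}}(u_1)=u_1\circ\sigma_{\mathcal{G}}$ a.e., which is then converted into the set identities via the strictly positive first eigenfunction; your $(K-L)(Q-P)^{+}$ identity is just a compact rewriting of the paper's decomposition into the sets $A=\{f\ge f\circ\sigma_{\mathcal{G}}\}$ and $B=\{f<f\circ\sigma_{\mathcal{G}}\}$. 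One small correction: the strict positivity of the maximizer on $\Omega$ comes from Proposition~\ref{prop-non-neg} (sign argument plus non-vanishing via the eigenvalue equation with positive kernel), not from the operator positivity of Theorem~\ref{thm_hyper_transfinite}, which by itself does not give it.
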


% \begin{theorem}\label{hyperbolicrfk}
%     Let $\Omega\subset \R^2$ be a bounded open set and $\Om^*$ be the disk with the same Lebesgue measure as $\Om$. Then, 
%     \begin{equation}\label{pollogrfk}
%             \tau_h(\Omega)\leq \tau_h(\Om^*)\tb{???}
%     \end{equation}
%      The equality holds in $\eqref{pollogrfk}$ only if $\Om=\Om^*$.
% \end{theorem}
In \cite[p.~367]{troutman1967}, the authors obtain a representation of the eigenfunctions associated with \eqref{oldevproblem} using tools from complex analysis. Let $\tau$ be an eigenvalue of $\mathcal{L}_h$ with corresponding eigenfunction $u$. If $\tau\neq 0$, then $u$ admits a continuous extension to $\mathbb{D}$ given by the same integral formula,
\begin{equation}\label{ef_on_D}
\tau u(z)
= \frac{1}{2} \int_\Omega \log\frac{1}{[z,w]}  u(w) \dw,
\quad \forall z \in \mathbb{D}.
\end{equation}
Following an approach similar to that of \cite{troutman1967}, we derive an analogous representation for eigenfunctions of problem~\eqref{evproblem}.
\begin{theorem}\label{thm_efreptheorem1}
    Let $\Omega \subset \mathbb{D}$ be an open set, bounded with respect to the hyperbolic metric $d_h$, and let $\tau$ be a nonzero eigenvalue of $\Lom_h$ satisfying \eqref{evproblem}, and $u$ be an associated eigenfunction. Then, for each $z\in\disk$ and for each $0<r<1-|z|$, the following formula holds:
    \begin{equation}\label{efrep1_hyper}
        \tau u(z)=\frac{\tau}{2\pi} \int_0^{2\pi} u(\phi_z(re^{i\theta}))d\theta-\frac{1}{2} \int_{\Om\cap \Delta_r(z)} \log \frac{[z,w]}{r}u(w)\dw,
    \end{equation}
    where $\Delta_r(z)=\{w:[z,w]<r\}=\{w:|\phi_z(w)|<r\}$. In addition, if $\tau = 0$, the above representation remains valid for any $z \in \Omega$ and any $r > 0$ such that $\Delta_r(z) \subset \Omega$.
\end{theorem}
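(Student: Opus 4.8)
The plan is to isolate a purely potential-theoretic identity and only feed in the eigenvalue relation at the end. Set $W(\zeta):=\frac12\int_\Om\log\frac{1}{[\zeta,w]}u(w)\dw$, the hyperbolic logarithmic potential of $u$. For $\tau\neq0$ the continuous extension \eqref{ef_on_D} gives $W(\zeta)=\tau u(\zeta)$ for \emph{every} $\zeta\in\disk$, which is what lets us evaluate at the points $\phi_z(re^{i\theta})$ on the pseudo-hyperbolic circle $\{w:[z,w]=r\}$ even when they leave $\Om$; for $\tau=0$ one has instead $W\equiv0$ on $\Om$. I would therefore first establish, for the potential alone, the mean value identity
\begin{equation}
W(z)=\frac{1}{2\pi}\int_0^{2\pi}W(\phi_z(re^{i\theta}))\,d\theta-\frac12\int_{\Om\cap\Delta_r(z)}\log\frac{[z,w]}{r}u(w)\dw,
\end{equation}
and only afterwards substitute $W=\tau u$ (respectively $W\equiv0$) to obtain \eqref{efrep1_hyper}.

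Applying the defining integral at $\zeta=\phi_z(re^{i\theta})\in\disk$ and interchanging the order of integration (justified by Tonelli, since $u$ is bounded on the compact set $\overline{\Om}\subset\disk$ and $\log\frac{1}{[\cdot,w]}$ is locally integrable in $w$), I obtain
\begin{equation}
\frac{1}{2\pi}\int_0^{2\pi}W(\phi_z(re^{i\theta}))\,d\theta=\frac12\int_\Om u(w)\left(\frac{1}{2\pi}\int_0^{2\pi}\log\frac{1}{[\phi_z(re^{i\theta}),w]}\,d\theta\right)\dw.
\end{equation}
Since $\phi_z$ is an involutive automorphism of $\disk$ under which the pseudo-hyperbolic distance is invariant, I may write $[\phi_z(re^{i\theta}),w]=[re^{i\theta},\phi_z(w)]$, so that the inner average reduces to $\frac{1}{2\pi}\int_0^{2\pi}\log\frac{1}{[re^{i\theta},\eta]}\,d\theta$ with $\eta:=\phi_z(w)$ and $|\eta|=[z,w]$.

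I expect this inner average to be the main and essentially only computational obstacle, and I would handle it by splitting the kernel into numerator and denominator,
\begin{equation}
\log\frac{1}{[re^{i\theta},\eta]}=-\log\bigl|re^{i\theta}-\eta\bigr|+\log\bigl|1-re^{-i\theta}\eta\bigr|.
\end{equation}
The classical mean value formulas for $\log|\cdot|$ give $\frac{1}{2\pi}\int_0^{2\pi}\log|re^{i\theta}-\eta|\,d\theta=\log\max(r,|\eta|)$ and, crucially, $\frac{1}{2\pi}\int_0^{2\pi}\log|1-re^{-i\theta}\eta|\,d\theta=0$. The second vanishes precisely because $r<1$ and $[z,w]<1$ inside the disk force $r|\eta|<1$, so that $\zeta\mapsto\log|1-\zeta\eta|$ is harmonic on $\{|\zeta|\le r\}$; this is the feature that keeps the hyperbolic formula as clean as the Euclidean one. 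Hence the inner average equals $-\log\max(r,[z,w])$.

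Finally I would split $\Om$ according to whether $[z,w]\ge r$ or $[z,w]<r$. On the first piece the average $-\log\max(r,[z,w])=\log\frac{1}{[z,w]}$ coincides with the integrand of $W(z)$, so these contributions cancel when the spherical mean is subtracted from $W(z)$; on $\Om\cap\Delta_r(z)$ the leftover difference $(-\log r)-\log\frac{1}{[z,w]}=\log\frac{[z,w]}{r}$ produces exactly the local term, yielding the displayed potential identity. Substituting $W=\tau u$, valid on all of $\disk$ for $\tau\neq0$, gives \eqref{efrep1_hyper}. For $\tau=0$ I instead use $W\equiv0$ together with $\phi_z(re^{i\theta})\in\Delta_r(z)\subset\Om$, so that both $W(z)$ and the spherical mean vanish and the identity collapses to the asserted vanishing of the local integral, completing the proof in that case.
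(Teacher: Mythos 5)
Your proposal is correct and follows essentially the same route as the paper: evaluate the potential at the circle points $\phi_z(re^{i\theta})$, use M\"obius invariance to reduce to $[re^{i\theta},\phi_z(w)]$, interchange the integrals, kill the denominator term via the classical formula $\int_0^{2\pi}\log|1-ae^{i\theta}|\,d\theta=0$ for $|a|\le 1$, and split $\Om$ at $[z,w]=r$. Your only cosmetic differences are packaging the computation as a mean-value identity for the potential $W$ before substituting $W=\tau u$ (respectively $W\equiv 0$ on $\Om$ when $\tau=0$), and writing the inner angular average as $-\log\max(r,[z,w])$, whereas the paper reaches the identical case distinction by factoring $\log|\phi_z(w)-re^{i\theta}|=\log|\phi_z(w)|+\log\bigl|1-re^{i\theta}/\phi_z(w)\bigr|$.
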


Using the representation of eigenfunctions of \eqref{oldevproblem}, Troutman \cite[Corollary~1]{troutman1967} claimed that 0 is not an eigenvalue. However, the argument is incomplete, as it relies on the continuity of an eigenfunction corresponding to the eigenvalue 0, whereas only the continuity of $\Lom u$ is established. This ensures continuity only for eigenfunctions associated with nonzero eigenvalues. The gap can be remedied by using the Lebesgue differentiation theorem. Thus we  prove the following result, which is a consequence of the preceding theorem.
\begin{theorem}\label{cor_zero}
    Let $\Omega \subset \mathbb{D}$ be an open set, bounded with respect to the hyperbolic metric $d_h$. Then zero is not an eigenvalue of $\Lom_h$.
\end{theorem}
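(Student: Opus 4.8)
The plan is to argue by contradiction: suppose $\tau=0$ is an eigenvalue of $\Lom_h$ with a nonzero eigenfunction $u\in L^2(\Om)$, and derive $u\equiv 0$. The idea is to feed $\tau=0$ into the representation formula \eqref{efrep1_hyper} of Theorem~\ref{thm_efreptheorem1}, extract a mean-value identity for $u$ over pseudo-hyperbolic disks, and then invoke the Lebesgue differentiation theorem in place of the (unavailable) continuity of $u$. Concretely, by the final assertion of Theorem~\ref{thm_efreptheorem1}, for every $z\in\Om$ and every $r>0$ with $\Delta_r(z)\subset\Om$ the right-hand side of \eqref{efrep1_hyper} vanishes, and since $\Om\cap\Delta_r(z)=\Delta_r(z)$ this reduces to
\begin{equation*}
    \int_{\Delta_r(z)}\log\frac{[z,w]}{r}\,u(w)\,\dw=0 .
\end{equation*}

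First I would turn this weighted identity into an unweighted mean-value identity. Setting
\[
\mu_z(r)\dq\int_{\Delta_r(z)}u(w)\,\dw,\qquad
\lambda_z(r)\dq\int_{\Delta_r(z)}u(w)\,\log[z,w]\,\dw,
\]
and splitting $\log\tfrac{[z,w]}{r}=\log[z,w]-\log r$, the vanishing integral reads $\lambda_z(r)=(\log r)\,\mu_z(r)$. Passing to polar coordinates in the variable $\zeta=\phi_z(w)$ (so that $[z,w]=|\zeta|$, $\Delta_r(z)$ corresponds to $\{|\zeta|<r\}$, and $\dw$ is preserved because $\phi_z$ is measure-preserving), one sees that $\mu_z$ and $\lambda_z$ are radial integrals of an $L^1_{\mathrm{loc}}$ density: here I use $u\in L^2(\Om)\subset L^1_{\mathrm{loc}}$, the boundedness of the weight $\tfrac{1}{\pi(1-|\cdot|^2)^2}$ on $\Delta_r(z)\subset\Om$ (which holds since $\Om$ is hyperbolically bounded, hence has closure compact in $\disk$), and the integrability of the logarithmic singularity of $\log[z,\cdot]$. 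Thus $\mu_z$ and $\lambda_z$ are absolutely continuous in $r$ with $\lambda_z'(r)=(\log r)\,\mu_z'(r)$ for a.e.\ $r$. Differentiating $\lambda_z(r)=(\log r)\mu_z(r)$ cancels the $(\log r)\mu_z'(r)$ terms and leaves $\tfrac1r\mu_z(r)=0$; by continuity of $\mu_z$ this gives $\mu_z(r)=\int_{\Delta_r(z)}u\,\dw=0$ for every admissible $r$ and every $z\in\Om$.

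The final step is precisely where Troutman's continuity argument is replaced. Each pseudo-hyperbolic disk $\Delta_r(z)$ is a genuine Euclidean disk containing $z$ and shrinking to $z$ as $r\to0^+$, and $\dw$ is mutually absolutely continuous with, and locally comparable to, Lebesgue measure on compact subsets of $\disk$; hence $\{\Delta_r(z)\}_{r>0}$ is a regularly shrinking (admissible) differentiation basis for the measure $\dw$, and the Lebesgue differentiation theorem applies. Therefore, for almost every $z\in\Om$,
\[
u(z)=\lim_{r\to0^+}\frac{1}{\int_{\Delta_r(z)}\dw}\int_{\Delta_r(z)}u(w)\,\dw=0,
\]
contradicting $u\neq0$ in $L^2(\Om)$. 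I expect the main technical obstacle to lie not in the elementary differentiation-in-$r$ computation, but in cleanly justifying this last limit — that is, verifying that the family $\{\Delta_r(z)\}_{r>0}$ shrinks nicely enough (in the Vitali sense) to serve as a valid differentiation basis for $\dw$, which rests on the fact that pseudo-hyperbolic balls are Euclidean disks and that the density $\tfrac{1}{\pi(1-|w|^2)^2}$ is continuous and positive on $\Om$.
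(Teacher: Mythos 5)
Your proposal is correct and follows essentially the same route as the paper's proof: substitute $\tau=0$ into the representation formula \eqref{efrep1_hyper}, differentiate in the radius to conclude $\int_{\Delta_r(z)}u(w)\,\dw=0$ for every $z\in\Om$ and every admissible $r$, and then apply the Lebesgue differentiation theorem, using that pseudo-hyperbolic disks are Euclidean disks shrinking nicely to their hyperbolic centers on the hyperbolically bounded set $\Om$. The only cosmetic difference is the bookkeeping of the differentiation step: the paper computes the derivative of $F_{z_0}(s)=\int_{\Delta_s(z_0)}\log\frac{[z_0,w]}{s}\,u(w)\,\dw$ directly via difference quotients with a Cauchy--Schwarz estimate on the annulus $\Delta_t\setminus\Delta_s$, whereas you split the logarithm and differentiate the identity $\lambda_z(r)=(\log r)\,\mu_z(r)$ using absolute continuity of the radial integrals.
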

Additionally, in \cite{troutman1967,Kac1970}, it was shown that the operator $\mathcal{L}$ on $L^{2}(\Omega)$ is positive if and only if the transfinite diameter of $\Omega$ is less than or equal to~$1$. In the present setting, no such restriction is required, as we work in the Poincar\'e disk model. Using Theorem~\ref{thm_efreptheorem1}, we establish the positivity of the operator $\mathcal{L}_h$, as stated in the following theorem.

% We next recall the definition of the hyperbolic transfinite diameter from \cite[p.94]{tsuji1975}.
% \begin{definition}{\textbf{(Hyperbolic transfinite diameter).}}
%     Let $E$ be a compact set in $\mathbb{D}$. For each $n\geq 2$, let
% \begin{equation}\label{def_rho_n}
%     h_n(E)= \max_{(x_1, x_2, \cdots x_n) \in E^n} \,\,  \left [\underset{1\leq i<j\leq n}{\prod} [x_i,x_j] \right]^{\frac{2}{n(n-1)}},
% \end{equation}   
% be the $n^{th}$-hyperbolic diameter of $E$. The hyperbolic transfinite diameter of $E$(denoted by $T_{diam}^h(E)$) is defined as
% \begin{equation}\label{limit_n_diameter}
%      T_{diam}^h(E)=\lim_{n\rightarrow\infty}h_n(E).
% \end{equation}
% % It is easy to observe that, if $A\subseteq B$, then $$T_{diam}(A)\leq T_{diam}(B).$$
% Another definition  of the hyperbolic transfinite diameter is given by $$ T_{diam}^h(E)=e^{- V_E},$$ where
% \begin{equation}\label{robinconst}
%    V_E=\underset{\nu}{\inf} \left\{ \iint\limits_{E\;E} \log\frac{1}{[x,y]}d\nu(x)d\nu(y)\right\},
% \end{equation}
% and $\nu$ varies over all normalized Borel measures on $E$\cite[Theorem III.53, p.95]{tsuji1975}.\tr{not required}
% \end{definition}
% Analogous to \cite[Theorem 3]{troutman1967}, we then obtain the following characterization, for the logarithmic potential operator in the hyperbolic space $\mathbb{D}$.
\begin{theorem}\label{thm_hyper_transfinite}
      Let $\Omega \subset \mathbb{D}$ be an open set, bounded with respect to the hyperbolic metric $d_h$. Then $\Lom_h $ is a positive operator on $L^2(\Om)$.
\end{theorem}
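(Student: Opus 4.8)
The plan is to exploit the fact that the kernel of $\Lom_h$ is, up to the normalisation already fixed in the paper, the Green function of $-\Delta_h$ on the whole disk $\disk$. Writing $G(z,w)\dq\tfrac12\log\frac{1}{[z,w]}$ and recalling both that $[z,w]=|\phi_z(w)|$ and that $g_h(|x|)=\tfrac12\log\frac{1}{|x|}$ is the fundamental solution of $\Delta_h$, the M\"obius invariance of $\Delta_h$ gives $-\Delta_h G(\cdot,w)=\delta_w$ for each fixed $w\in\disk$; equivalently, as noted in the discussion of the invariant convolution, $\Lom_h u=u\star g_h$ solves $-\Delta_h\psi=u$. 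Since $[z,w]\to1$ as $z\to\partial\disk$, we have $G(z,w)\to0$ on $\partial\disk$, so $G(\cdot,w)$ is precisely the Dirichlet Green function of $-\Delta_h$ on $\disk$ with pole at $w$. This is the structural reason why no smallness/transfinite-diameter hypothesis is needed here, in contrast with the Euclidean case, where the free-space logarithmic kernel fails to vanish at infinity.

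First I would fix $u\in L^2(\Om)$, extend it by zero to $\disk$, and set $\psi\dq\Lom_h u$. By the representation in Theorem~\ref{thm_efreptheorem1} (equivalently, formula~\eqref{ef_on_D}), $\psi$ is given by the same integral for every $z\in\disk$, hence is well defined on all of $\disk$ and satisfies, distributionally, $-\Delta_h\psi=u$ on $\Om$ and $-\Delta_h\psi=0$ on $\disk\setminus\overline{\Om}$. Because $\Om$ is bounded for $d_h$, its closure is a compact subset of $\disk$ staying away from $\partial\disk$; hence $G(z,w)$ is uniformly small for $w\in\Om$ as $z\to\partial\disk$, which yields $\psi(z)\to0$ at $\partial\disk$.

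The heart of the argument is a Green's-identity computation. Since $u$ is supported in $\Om$,
\begin{equation*}
\lb \Lom_h u,u\rb=\int_{\Om}\psi\,u\dz=\int_{\disk}\psi\,(-\Delta_h\psi)\dz,
\end{equation*}
the last equality being valid because $-\Delta_h\psi=0$ off $\Om$. Integrating by parts on $\disk$ with the hyperbolic Green identity, I would obtain
\begin{equation*}
\lb \Lom_h u,u\rb=\int_{\disk}|\Gr_h\psi|^2\dz-\lim_{\rho\to1^-}\oint_{|z|=\rho}\psi\,\partial_\nu\psi\,\dS,
\end{equation*}
where the boundary term vanishes because $\psi\to0$ at $\partial\disk$. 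Hence $\lb\Lom_h u,u\rb=\int_{\disk}|\Gr_h\psi|^2\dz\ge0$. Equality forces $\Gr_h\psi\equiv0$, so $\psi$ is constant; the vanishing boundary values then give $\psi\equiv0$, and therefore $u=-\Delta_h\psi=0$. Thus $\lb\Lom_h u,u\rb>0$ for every $u\neq0$, and $\Lom_h$ is a positive operator.

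I expect the main obstacle to be the rigorous justification of the integration by parts: one must control the regularity of the potential $\psi$ up to $\partial\disk$ and make the limiting boundary integral over the circles $|z|=\rho$ genuinely vanish rather than argue formally. Establishing $-\Delta_h\psi=u$ in the distributional sense (from the fundamental-solution property of $g_h$ together with Fubini) and the decay $\psi\to0$ at $\partial\disk$ uniformly enough to kill the flux term are the two technical points requiring care; the remainder is bookkeeping, with the conformal invariance of the Dirichlet integral in dimension two available if one prefers to transfer the energy to its Euclidean form.
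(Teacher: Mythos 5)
Your proof is correct in substance, but it takes a genuinely different route from the paper. The paper argues by contradiction through spectral theory: if $\Lom_h$ were not positive, the infimum $\widetilde{\tau_h}(\Omega)<0$ of the quadratic form would be a negative eigenvalue of the compact self-adjoint operator $\Lom_h$; the associated eigenfunction decays at $\partial\disk$ (Remark~\ref{rem_boundary}), hence attains an interior negative minimum (or positive maximum), and the representation formula of Theorem~\ref{thm_efreptheorem1} then forces a mean-value contradiction, since $\log\frac{[z_0,w]}{s}<0$ and $u<0$ on $\Delta_s(z_0)$ have a definite product while $\widetilde{\tau_h}<0$ flips the needed inequality. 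You instead identify the kernel $\tfrac12\log\frac{1}{[z,w]}$ as the Dirichlet Green function of $-\Delta_h$ on all of $\disk$ and convert $\lb\Lom_h u,u\rb$ into a Dirichlet energy via Green's identity, killing the flux term by the boundary decay of the potential. Your approach buys more: it gives strict positive definiteness ($\lb\Lom_h u,u\rb>0$ for $u\neq 0$, with the equality analysis $\Gr\psi\equiv 0\Rightarrow\psi\equiv 0\Rightarrow u=0$), and it explains structurally why no transfinite-diameter hypothesis appears, namely the kernel vanishes on $\partial\disk$ unlike the free-space Euclidean kernel; the paper's argument only excludes negative eigenvalues but stays entirely within the tools it has already built (the representation formula and Remark~\ref{rem_boundary}) and avoids distributional PDE machinery. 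The technical points you flag are real but routine: for $u\in L^2(\Omega)$ you cannot invoke Stoll's Lemma~6.9 directly (it assumes more regularity of the density), so you should verify $-\Delta\psi = u(z)/(1-|z|^2)^2$ in $\mathcal{D}'(\disk)$ by splitting $\log\frac{1}{[z,w]}=\log\frac{1}{|z-w|}+\log|1-\overline{z}w|$, the second term being harmonic in $z$ for $w\in\disk$, which reduces everything to the classical Newtonian potential of a compactly supported $L^2$ density (giving $\psi\in H^{2}_{\mathrm{loc}}$, hence $\Gr\psi\in L^2$); since $\overline{\Omega}$ is compact in $\disk$, the kernel is smooth in $z$ near $\partial\disk$, so $\psi=O(1-|z|)$ with bounded gradient there and the limit of the circle integrals over $|z|=\rho$ genuinely vanishes. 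Two minor corrections: the citation of Theorem~\ref{thm_efreptheorem1} and \eqref{ef_on_D} is unnecessary, as the defining integral of $\Lom_h u(z)$ already makes sense for every $z\in\disk$ (\eqref{ef_on_D} concerns eigenfunctions), and in two dimensions the conformal invariance you mention makes all hyperbolic-versus-Euclidean normalization constants harmless, as only their positivity matters.
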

% One can observes that $ T_{diam}^h(\mathbb{D}) \leq 1$. Therefore, it follows that for any $\Om \subseteq \mathbb{D}$ we also have $  T_{diam}^h(\overline{\Om}) \leq 1$. Consequently, 

In our setting, the operator $\mathcal{L}_h$ is positive and therefore has no negative eigenvalues. In contrast, in the two-dimensional Euclidean case, the operator $\mathcal{L}$ admits a unique negative eigenvalue if and only if the transfinite diameter of $\Omega$ exceeds~$1$.

% We further generalize to consider a broader class of compact self-adjoint operators $\K$, including the Riesz potentials, defined by
% \begin{equation*}
%     \K u(x)=\int_\Omega k([x,y])u(y)dy,
% \end{equation*}
% where $k(t)>0$ is a radially decreasing function.
% Consider the eigenvalue problem
% \begin{equation}
%     \K u =\mu gu.
% \end{equation}

 \par This article is organized as follows. In Section \ref{sec_def_pol}, we introduce the definition of polarization on Poincar\'{e} disk model. In Section \ref{sec:main_theorems}, we prove the theorems stated in the Introduction.

 \section{Polarization on poincar\'e disk model}\label{sec_def_pol}
In this section, we recall the definition and basic properties of geodesics in the Poincar\'e disk model. We then define the reflection of a point $z \in \mathbb{D}$ with respect to a geodesic $\mathcal{G}$, and subsequently introduce the notion of polarization in the Poincar\'e disk model. Geodesics in the Poincar\'e disk are either straight lines through the origin or circles(Clines) orthogonal to $\mathbb{S}^1$\cite[Proposition A.5.6, p.26]{Benedetti_Carlo_1992}. We state the following definitions from \cite[Chapter 3]{hitchman}.
\begin{definition}{\bf (Cline).}
    A cline is a Euclidean line or circle, represented by the equation
    \begin{equation*}
        cz\ol{z}+\alpha z+ \ol{\alpha z}+d=0,
    \end{equation*}
    where $z = x + iy$ is a complex variable, $\alpha \in \mathbb{C}$, and $c, d \in \mathbb{R}$. The equation describes a line if $c = 0$, and a circle if $c \ne 0$ and $|\alpha|^2 > cd$.
\end{definition}
Two clines are said to be orthogonal if they intersect at right angles. For example, a line is orthogonal to a $\s^1$ if and only if it passes through the origin\cite[p.42]{hitchman}. We recall the following characterization \cite[Theorem 3.2.8]{hitchman}.
\begin{proposition}
    A cline through $z\neq 0\in \disk $ is orthogonal to $\s^1$ if and only if it passes through $z^*$, the point symmetric to $z$ with respect to $\s^1$, where $z^*$ is defined as
        \begin{equation*}
            z^*=\begin{cases}
                \frac{z}{|z|^2}, & z\neq 0,\\
                \infty, &z=0.
        \end{cases}
        \end{equation*}
\end{proposition}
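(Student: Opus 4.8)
The plan is to translate the geometric statement into a purely algebraic one at the level of the cline coefficients $(c,\al,d)$ and then to settle both implications simultaneously through a single substitution. Write the cline as $c z\ol z + \al z + \ol{\al}\,\ol z + d = 0$ with $c,d\in\R$, $\al\in\com$, and recall that $\s^1$ corresponds to the data $(c,\al,d)=(1,0,-1)$.

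First I would record the orthogonality condition for two clines in terms of their coefficients. For two genuine circles with centres $p_i$ and radii $r_i$, orthogonality is the Pythagorean relation $|p_1-p_2|^2 = r_1^2 + r_2^2$; writing $p_i=-\ol{\al_i}/c_i$ and $r_i^2 = |\al_i|^2/c_i^2 - d_i/c_i$ and simplifying, this reduces to the symmetric relation $2\,\mathrm{Re}(\al_1\ol{\al_2}) = c_1 d_2 + c_2 d_1$, which is the standard orthogonality criterion for clines and remains valid in the line cases $c_i=0$. Specialising the second cline to $\s^1$ yields the clean criterion that a cline $(c,\al,d)$ is orthogonal to $\s^1$ if and only if $c=d$.

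The heart of the argument is then an explicit evaluation. Assuming the cline passes through $z$ gives $c|z|^2 + \al z + \ol\al\,\ol z + d = 0$, that is, $\al z + \ol\al\,\ol z = -c|z|^2 - d$. Substituting $z^* = z/|z|^2$ (so that $|z^*|^2 = 1/|z|^2$) into the defining expression and using this identity, I expect the value at $z^*$ to collapse to
\[
c|z^*|^2 + \al z^* + \ol\al\,\ol{z^*} + d = \frac{c-d}{|z|^2}\bigl(1-|z|^2\bigr).
\]
Since $z\in\disk\setminus\{0\}$ forces $|z|^2\neq 0$ and $1-|z|^2\neq 0$, this quantity vanishes precisely when $c=d$. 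Hence the cline passes through $z^*$ if and only if $c=d$, which by the previous step is exactly orthogonality to $\s^1$; this establishes both directions at once.

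The only real care is bookkeeping in the degenerate situations: when $c=0$ the cline is a Euclidean line, and one must check that orthogonality to $\s^1$ (a line through the origin, $d=0$) still matches $c=d$, which it does. Since $z\neq 0$ is assumed, the point $z^*$ is finite and distinct from $z$, so the point at infinity causes no trouble, and the single displayed identity treats lines and circles uniformly. I expect the main obstacle to lie in stating the two-cline orthogonality relation correctly across all cases, rather than in the final substitution, which is routine.
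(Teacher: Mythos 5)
Your proof is correct, but note that the paper never proves this proposition at all: it is recalled from the cited source (\cite[Theorem 3.2.8]{hitchman}), where the standard argument is geometric rather than algebraic --- a cline is orthogonal to $\s^1$ exactly when it is invariant under inversion in $\s^1$, and invariance combined with $z\in C$ forces $z^*\in C$ (and conversely). Your route replaces this with a coefficient computation, and it checks out. The orthogonality relation $2\,\mathrm{Re}(\al_1\ol{\al_2})=c_1d_2+c_2d_1$ is the correct one: with centres $p_i=-\ol{\al_i}/c_i$ and radii $r_i^2=|\al_i|^2/c_i^2-d_i/c_i$, the terms $|\al_i|^2/c_i^2$ cancel in the Pythagorean relation, and specialising to $(c_2,\al_2,d_2)=(1,0,-1)$ gives orthogonality to $\s^1$ iff $c=d$. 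The substitution identity is also right: if $c|z|^2+\al z+\ol{\al}\,\ol{z}+d=0$, then
\begin{equation*}
c|z^*|^2+\al z^*+\ol{\al}\,\ol{z^*}+d
=\frac{c+(\al z+\ol{\al}\,\ol{z})+d|z|^2}{|z|^2}
=\frac{(c-d)\bigl(1-|z|^2\bigr)}{|z|^2},
\end{equation*}
which vanishes iff $c=d$ since $0<|z|<1$ for $z\in\disk\setminus\{0\}$, so both implications follow at once. Your handling of the degenerate cases is also sound: for $c_1=0$ the relation reduces to $2\,\mathrm{Re}(\al_1\ol{\al_2})=c_2d_1$, which for the circle $\s^1$ says the line passes through the origin, i.e.\ $d=0=c$, consistent with the general criterion; and since $z\neq 0$, the point $z^*$ is finite so no case at infinity arises. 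One small caveat: your Pythagorean derivation presumes genuine circles ($c_i\neq 0$, $|\al_i|^2>c_id_i$), so the line cases are not a consequence of it but a separate verification, as you correctly flag. As for the trade-off, your algebraic argument treats lines and circles uniformly through a single explicit identity and even quantifies the failure of incidence by the factor $(c-d)(1-|z|^2)/|z|^2$, whereas the inversion-symmetry argument in the cited source avoids all coefficient bookkeeping and generalises verbatim to symmetry with respect to an arbitrary circle rather than $\s^1$.
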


Next, we define the reflection of a point in $\mathbb{D}$ with respect to a geodesic $\G$. 
Since the reflection with respect to the imaginary axis in $\disk$ can be easily defined, and the imaginary axis is symmetric about the $x$-axis, we begin by considering a geodesic $\G$ that is symmetric with respect to the $x$-axis. 
Accordingly, we take $\G$ to be the geodesic passing through $(a,0), \,0<a<1$ and orthogonal to $\mathbb{S}^1$.

\begin{center}
     \captionsetup{type=figure}
     \begin{tikzpicture}[scale=1.5]
         \path[name path=circleA] (0,0) circle(1);
         \draw[thick, name path=circleA] (0,0) circle(1);
         \path[name path=circleB] (1.25,0) circle(0.75);
         \draw[dashed, name path=circleB] (1.25,0) circle(0.75);

        \draw[<->, thick] (-2,0) -- (3,0);      
        \draw[<->, thick] (0,2) -- (0,-2);
        \draw[-, thick] (0.5,1) -- (1.1,0.2); 
        \draw[-, thick] (0.5,0.375) -- (1.1,0.825); 
        \filldraw[black] (2,0) circle (1pt);
        \filldraw[black] (0.5,0) circle (1pt);
        %\filldraw[black] (1,0) circle (1pt);
        \filldraw[black] (1.25,0) circle (1pt);
        
          \path[name intersections={of=circleA and circleB, by={[label=above:$A$]P, [label=below:$B$]Q}}];
          % Mark the intersection points
          \filldraw[black] (P) circle(0.03);
          \filldraw[black] (Q) circle(0.03); 

          \coordinate (A) at (1.1,0.825);
          \coordinate (B) at (0.8,0.6);
          \coordinate (C) at (1.1,0.2);
          % Mark the right angle at A
          \pic [draw, thick, angle radius=2mm] {right angle = A--B--C};
    
        \begin{scriptsize}    
            \draw (0.5,-0.2) node {$(a,0)$};
            \draw (2.3,-0.2) node {$(a^*,0)$};    
            %\draw (1.05,-0.2) node {$(1,0)$};
            \draw (1.25,0.1) node {$z_0$};
            \draw (0.3,1.1) node {$\s^1$};
            \draw (1.3,0.9) node {$C$};
        \end{scriptsize}
        
     \end{tikzpicture}
     
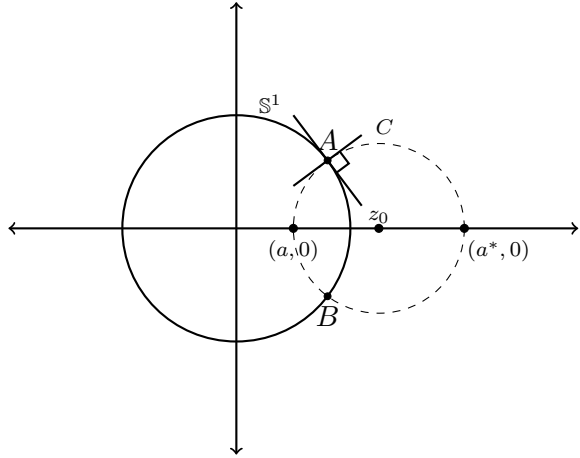
\captionof{figure}{Geodesic passing through $(a,0)$}
 \end{center}
The dotted circle $C$\label{page_circle} is centered at $z_0=\frac{1+a^2}{2a}$ with Euclidean radius $r=\frac{1-a^2}{2a}$. It admits the parametrization $$z=\frac{1+a^2}{2a}+\frac{1-a^2}{2a}e^{i\theta}.$$ Then the geodesic $\G$ passing through the point $(a,0)$ is given by the intersection $$\G=C\cap \disk.$$ The points at which $C$ intersects the unit circle $\mathbb{S}^{1}$ are
\begin{align*}
    A&=\frac{1}{z_0} \left(1,\sqrt{z_0^2-1}\right)=\frac{2a}{a^2+1}\left(1,\frac{1-a^2}{2a}\right),\\
    B&=\frac{1}{z_0} \left(1,-\sqrt{z_0^2-1}\right)=\frac{2a}{a^2+1}\left(1,\frac{a^2-1}{2a}\right).
\end{align*}
Since reflection with respect to the imaginary axis is readily described in the Poincar\'e disk model, our approach is to map the geodesic $\G$ onto the imaginary axis by a M\"obius transformation. The reflection is then performed with respect to the imaginary axis and subsequently transferred back via the inverse M\"obius transformation. A natural choice of the M\"obius transformation $T$ is the one that maps the geodesic
\(\mathcal G\) onto the imaginary diameter $I_D$, that is
\[
T(\mathcal G)=\{\,iy : y\in\mathbb{R}\,\}\cap\mathbb{D}:=I_D,
\]
and satisfies
\[
T(A)=i,\; T(a)=0,\;\text{and} \; T(B)=-i.
\]
More precisely, the required M\"obius transformation is
\[
T(z)=\frac{z-a}{1-az}.
\]
We establish below the properties of \(T\) that are essential for defining the desired reflection.

\begin{proposition}\label{prop_map_T}
    Let $a\in(0,1)$. The M\"obius transformation $T(z)=\frac{z-a}{1-az}$ has the following properties:
    \begin{enumerate}[(i)]
        \item $T(a)=0, \, T(a^*)=\infty,\, T(A)=i$ and $T(B)=-i$.
        \item $T(C)$ is the imaginary axis.
        \item $T(\s^1)=\s^1$, $T(\disk)=\disk$ and $T(\ol{\disk}^c)=\ol{\disk}^c$.
        \item The interior of $C$, denoted by $U$, is mapped to the right half-plane $\{Re(z)>0\}$ and $\ol{U}^c$ is mapped to the left half-plane $\{Re(z)<0\}$.
        \item The region $U\cap \disk$ is mapped to the intersection of the unit disk with the right half-plane.
        \item The region $\ol{U}^c\cap \disk$ is mapped to the intersection of the unit disk with the left half-plane.
        \item\label{point_1} $T(\G)=I_D$.
    \end{enumerate}
\end{proposition}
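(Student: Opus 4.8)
The plan is to treat $T$ as a Möbius transformation of the Riemann sphere $\widehat{\com}=\com\cup\{\infty\}$ and to reduce all seven items to a handful of explicit point evaluations together with three structural facts: that $T$ sends clines to clines, that $T$ is a homeomorphism of $\widehat{\com}$ (hence carries the connected components of the complement of any cline onto the components of the complement of its image), and that $T$ is a bijection, so that $T(X\cap Y)=T(X)\cap T(Y)$. First I would dispose of item (i) by direct computation. Here $T(a)=0$ is immediate, and $T(a^*)=T(1/a)$ has vanishing denominator $1-a\cdot(1/a)=0$ with nonzero numerator, so it equals $\infty$. For $A=\frac{2a+i(1-a^2)}{1+a^2}$ I would factor $A-a=\frac{(1-a^2)(a+i)}{1+a^2}$ and $1-aA=\frac{(1-a^2)(1-ia)}{1+a^2}$, whence $T(A)=\frac{a+i}{1-ia}=i$; since $T$ has real coefficients and $B=\ol{A}$, this gives $T(B)=\ol{T(A)}=-i$.

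For items (ii) and (iii) I would argue structurally. I would first record that $a^*=1/a$ lies on $C$, since its distance from the center $z_0=\frac{1+a^2}{2a}$ equals the radius $\frac{1-a^2}{2a}$. Then $T(C)$ is a cline passing through $T(A)=i$, $T(B)=-i$ and $T(a^*)=\infty$; a cline through $\infty$ is a line, and the only line through $\pm i$ is the imaginary axis, giving (ii). For (iii) I would verify $|T(z)|=1$ on $\s^1$ via the identity $|z-a|^2=|1-az|^2$ for $|z|=1$ (both sides equal $1-a(z+\ol z)+a^2$), so $T(\s^1)=\s^1$. Because $T$ is a homeomorphism of $\widehat{\com}$ it permutes the two components of $\widehat{\com}\setminus\s^1$, and the test value $T(0)=-a\in\disk$ shows it fixes each, i.e. $T(\disk)=\disk$ and $T(\ol{\disk}^c)=\ol{\disk}^c$.

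The remaining items hinge on orientation. For (iv), $T$ maps the bounded component $U$ of the complement of $C$ onto one side of the imaginary axis; evaluating at the center $z_0\in U$ gives $T(z_0)=\frac{1}{a}>0$, which lands in the right half-plane, so $T(U)=\{\mathrm{Re}(z)>0\}$ and consequently $T(\ol{U}^c)=\{\mathrm{Re}(z)<0\}$. Items (v), (vi) and (vii) then follow purely formally from (ii)--(iv) and bijectivity: $T(U\cap\disk)=T(U)\cap T(\disk)=\{\mathrm{Re}(z)>0\}\cap\disk$, similarly for (vi), and $T(\G)=T(C\cap\disk)=T(C)\cap T(\disk)=I_D$.

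The computations themselves are routine, so the hard part will be the two ``side'' determinations rather than any single algebraic step. Concretely, the argument genuinely relies on verifying that $a^*\in C$, which is what forces $T(C)$ to be a line through $\infty$ rather than a finite circle, and on the orientation check at $z_0$ that pins $U$ (not $\ol{U}^c$) to the right half-plane. I expect the main care to lie in fixing once and for all that $U$ is the bounded component interior to $C$ and then matching that identification consistently through items (iv)--(vi), so that no half-plane is accidentally swapped.
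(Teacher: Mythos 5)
Your proof is correct, and where it differs from the paper's the differences are genuine and worth recording. For (ii), the paper parametrizes $C$ as $z_0+re^{i\theta}$ and computes $T(z_0+re^{i\theta})=i\,\frac{\sin\theta}{a(1-\cos\theta)}$ directly, whereas you argue structurally: after checking $a^*\in C$ (so $T(a^*)=\infty$ lies on the image cline), $T(C)$ is the unique cline through $i$, $-i$, $\infty$, hence the imaginary axis. Your route is less computational and makes transparent \emph{why} the image is a line rather than a circle; the paper's parametrization has the side benefit of exhibiting the image explicitly (since $\sin\theta/(1-\cos\theta)=\cot(\theta/2)$ sweeps all of $\mathbb{R}$, surjectivity onto the axis is visible). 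For (iii), the paper proves the single identity $|1-az|^2-|z-a|^2=(1-a^2)(1-|z|^2)$, which settles all three cases $|z|<1$, $|z|=1$, $|z|>1$ simultaneously; you verify only the boundary case and then use the homeomorphism/test-point device ($T(0)=-a\in\disk$) to sort the two complementary components --- the same device the paper reserves for (iv). Both are valid, though the paper's identity is slightly more economical, as essentially the same computation reappears in Proposition~\ref{prop_refle_dist}. In (iv) you test the center $z_0\in U$, computing $T(z_0)=1/a>0$, while the paper tests $0\in\ol{U}^c$ with $T(0)=-a$; these are equivalent, and your handling of (v)--(vii) via bijectivity and $T(X\cap Y)=T(X)\cap T(Y)$ matches the paper exactly. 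One shared and harmless imprecision: since $T(\infty)=-1/a$, the set $T(\ol{U}^c)$ (taken in $\com$) is the left half-plane minus the point $-1/a$; as $|-1/a|>1$, this does not affect (v)--(vii), where everything is intersected with $\disk$.
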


 \begin{minipage}[t]{0.5\textwidth} 
     \begin{center}
     \captionsetup{type=figure}
     \begin{tikzpicture}[scale=1.25]
         \draw[dotted] (0,0) circle(1);
         \begin{scope}
        \clip (0,0) circle (1cm);
        \fill[dgray] (1.25,0) circle (0.75cm); 
        \draw[dotted](1.25,0) circle (0.75cm); 
        \end{scope}
        \draw[->](1.25,0) -- (1.75,0); 
        \begin{scope}
        \clip (2,-1) rectangle (3,1);
        \fill[dgray] (2,0) circle (1cm); 
        \draw[dotted](2,0) circle (1cm);
        \draw[dotted](2,-1) -- (2,1); 
        \end{scope}
     \end{tikzpicture}
     \captionof{figure}{$T(U\cap \disk)=\disk\cap \{x>0\}$}
 \end{center}
 \end{minipage}
 \begin{minipage}[t]{0.51\textwidth} 
 \begin{center}
 \captionsetup{type=figure}
     \begin{tikzpicture}[scale=1.25]
        \draw[dotted] (0,0) circle(1);
        \fill[dgray] (0,0) circle (1cm); 
        \begin{scope}
        \clip (0,0) circle (1cm);
        \fill[white] (1.25,0) circle (0.75cm); 
        \draw[dotted](1.25,0) circle (0.75cm); 
    \end{scope}
    \draw[->](1.25,0) -- (1.75,0); 
    \begin{scope}
        \clip (2,-1) rectangle (3,1);
        \fill[dgray] (3,0) circle (1cm); 
        \draw[dotted](3,0) circle (1cm);
        \draw[dotted](3,-1) -- (3,1); 
        \end{scope}
     \end{tikzpicture}
     \captionof{figure}{$T(\ol{U}^c\cap \disk)=\disk\cap \{x<0\}$}
 \end{center}
 \end{minipage}

\begin{proof}
\hfill\\
\begin{enumerate}[(i)]
    \item $T(a)=0, \, T(a^*)=\infty$, follows directly from the definition of $T$. Next, consider $T(A)=\frac{A-a}{1-aA}$. 
            % \begin{equation}
            % T(A)=\frac{A-a}{1-aA}.
            % \end{equation}            
            A direct computation gives
            \begin{equation}
            A-a=\frac{1-a^{2}}{1+a^{2}}(a+i),
            \quad\text{and}\quad
            1-aA=\frac{1-a^{2}}{1+a^{2}}(1-ai).
            \end{equation}            
            Hence,
            \begin{equation}
            T(A)=\frac{a+i}{1-ai}=i.
            \end{equation}
            Similarly,
            \begin{equation}
            T(B)=\frac{a-i}{1+ai}=-i.
            \end{equation}
    \item \begin{align}
            T(z_0+re^{i\theta})&= \frac{z_0-a+re^{i\theta}}{1-z_0a -are^{i\theta}}=\frac{r+re^{i\theta}}{ar-are^{i\theta}}=\frac{1}{a}\cdot\,\frac{1+e^{i\theta}}{1-e^{i\theta}}\\
            &=\frac{1}{a}\cdot\,
\frac{e^{i\theta}-e^{-i\theta}}
{(1-\cos\theta)^2+\sin^2\theta}=i\frac{\sin\theta}{a(1-\cos\theta)}
          \end{align}
     \item We have 
            \begin{equation}
                |T(z)|^2=\frac{|z-a|^2}{|1-az|^2}.
            \end{equation}
     %\begin{align*}
    %         T(e^{i\theta})&=\frac{e^{i\theta}-a}{1-ae^{i\theta}}=\frac{e^{i\theta}+a^2 e^{-i\theta}-2a}{1+a^2-2a\cos\theta}=\frac{\cos\theta(1+a^2)-2a+i\sin\theta(1-a^2)}{1+a^2-2a\cos\theta}\\
    %         |T(e^{i\theta})|^2&= \frac{\cos^2\theta(1+a^4+2a^2)+4a^2-4a\cos\theta -4a^3\cos\theta+sin^2\theta(1+a^4-2a^2)}{1+a^4+2a^2+4a^2\cos^2\theta-4a\cos\theta-4a^3\cos\theta}\\
    %         &=\frac{1+a^4+2a^2\cos 2\theta+4a^2-4a\cos\theta-4a^3\cos\theta}{1+a^4+2a^2(2+\cos 2\theta)-4a\cos\theta-4a^3\cos\theta}=1
    %       \end{align*}
          Consider
    \begin{align*}
        |1-az|^2 - |z-a|^2
        &= (1-2a \, Re(z)+a^2|z|^2)
           -(|z|^2-2a\, Re(z)+a^2) \\
        &= (1-a^2)(1-|z|^2).
    \end{align*}
    Hence,
    \begin{align*}
        |T(z)|<1 &\quad \text{if } |z|<1, \\
        |T(z)| = 1 &\quad \text{if } |z|=1, \\
        |T(z)| >1 &\quad \text{if } |z|>1.
    \end{align*}

    \item Since Möbius transformations are homeomorphisms, they preserve connected components. Moreover, $T(C) \text{ is the imaginary axis},$ so that \(T(U)\) and \(T(\overline{U}^{\,c})\) lie in the two complementary open half-planes determined by the imaginary axis. Furthermore,
        \[
        0\in \ol{U}^c \quad \text{and} \quad T(0)=-a,
        \]
        which lies on the left of the imaginary axis. Therefore,
        \[
        T(U)=\{Re(z)>0\}, \;\text{ and }\; T(\overline{U}^{\,c})\subset\{Re(z)<0\}.
        \]

    \item  and (vi) follow from the facts that 
             \[
        T(U\cap \disk)=T(U)\cap T(\disk),
        \quad 
        T(\overline{U}^{\,c}\cap \disk)=T(\overline{U}^{\,c})\cap T(\disk)\quad\text{and} \quad T(\disk)=\disk,
        \]
         together with part (iv).

\end{enumerate}   
\begin{enumerate}[(i)]
    \setcounter{enumi}{6}
    \item We have 
            \begin{equation}
                T(\G)=T(C\cap \disk)=T(C)\cap T(\disk)=\{\,iy : y\in\mathbb{R}\,\}\cap\mathbb{D}=I_D.
            \end{equation}
\end{enumerate}
\end{proof}

Using this M\"obius transformation, we define the reflection of a point with
respect to a geodesic \(\mathcal G\) as follows.

\begin{definition}\label{def_reflection}{\textbf{(Reflection of a point with respect to a geodesic $\G$)}.}
Geodesics in the Poincar\'e disk model are either diameters of the disk or arcs of circles that intersect the boundary $\mathbb{S}^1$ orthogonally \cite[Proposition A.5.6, p.26]{Benedetti_Carlo_1992}. We divide all geodesics in $\disk$ into three cases and define the associated reflections in each case. First, we consider geodesics that are straight lines passing through the origin. Second, we consider geodesics that are circular arcs symmetric with respect to the $x$-axis. Finally, we define reflection with respect to  arbitrary geodesics that are circular arcs. 
    \begin{itemize}
        \item Let $\G=L$ be a line passing through the origin making an angle $\theta$ ($<\pi$) with the real axis in the anti clockwise direction. The reflection of a point $z$ with respect to $L$, denoted by $\sigma_\G(z)$, is obtained by rotating $L$ to the imaginary axis, reflecting with respect to the imaginary axis, and rotating back. The sequence of transformations:
        \begin{equation}
            z\ra e^{i\left(\frac{\pi}{2}-\theta\right)}z= i e^{-i\theta}z\ra ie^{i\theta}\ol{z}\ra ie^{i\theta}\ol{z} e^{i\left(\theta-\frac{\pi}{2}\right)}=\ol{z}e^{2i\theta}.
        \end{equation}
        Thus,
        \begin{equation*}
            \sigma_\G(z)=\ol{z}e^{2i\theta}.
        \end{equation*}
        Equivalently, this reflection can be written as
        \begin{equation*}
            \sigma_\G(z)
            = e^{-i\theta'}\, I^{-1}\!\left(\overline{-\,I\!\left(e^{i\theta'}z\right)}\right),
        \end{equation*}
        where $I$ denotes the identity map and $\theta'=\tfrac{\pi}{2}-\theta$.

        \item Let $\G$ be a geodesic passing through $(a, 0)$ with $0<a<1$, and symmetric with respect to $x$-axis. To define reflection with respect to $\G$, we use the M\"obius transformation $T$ that maps $\G$ to the imaginary diameter: 
        \begin{equation*}
            T(z)=\frac{z-a}{1-az}.
        \end{equation*}        
        Then, by reflecting with respect to the imaginary axis and by taking inverse M\"obius transformation, we get the required reflection with respect to $\G$:
        \begin{equation*}
            z\ra T(z)\ra\ol{-T(z)}\ra T^{-1}(\ol{-T(z)}) .
        \end{equation*}
        More precisely, 
        \begin{equation*}
            \sigma_\G(z)=T^{-1}(\ol{-T(z)}) .
        \end{equation*}

        \begin{center}
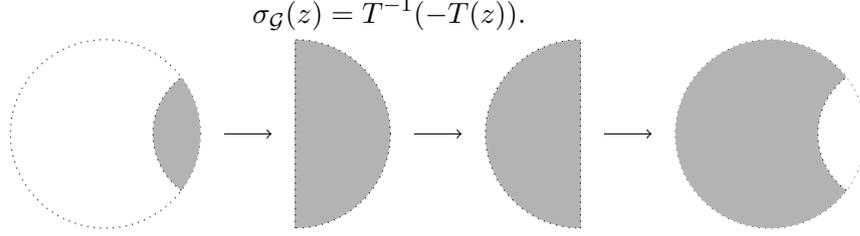

     \captionsetup{type=figure}
     \begin{tikzpicture}[scale=1.25]
         \draw[dotted] (0,0) circle(1);
         \begin{scope}
        \clip (0,0) circle (1cm);
        \fill[dgray] (1.25,0) circle (0.75cm); 
        \draw[dotted](1.25,0) circle (0.75cm); 
        \end{scope}
        
        \draw[->](1.25,0) -- (1.75,0); 
        
        \begin{scope}
        \clip (2,-1) rectangle (3,1);
        \fill[dgray] (2,0) circle (1cm); 
        \draw[dotted](2,0) circle (1cm);
        \draw[dotted](2,-1) -- (2,1); 
        \end{scope}
        
        \draw[->](3.25,0) -- (3.75,0); 

        \begin{scope}
        \clip (4,-1) rectangle (5,1);
        \fill[dgray] (5,0) circle (1cm); 
        \draw[dotted](5,0) circle (1cm);
        \draw[dotted](5,-1) -- (5,1); 
        \end{scope}
        
        \draw[->](5.25,0) -- (5.75,0); 
        
        \draw[dotted] (7,0) circle(1);
        \fill[dgray] (7,0) circle (1cm); 
        \begin{scope}
        \clip (7,0) circle (1cm);
        \fill[white] (8.25,0) circle (0.75cm); 
        \draw[dotted](8.25,0) circle (0.75cm); 
        \end{scope}
   
     \end{tikzpicture}
     \captionof{figure}{Reflection of a region}
 \end{center}

        \item Let $\G$ be an arbitrary geodesic in $\disk$. Choose an angle $\theta$ such that $e^{i\theta}\G$ is symmetric with respect to the real axis, and $\G \cap \{\text{real axis}\} = \{(a, 0)\}$, where $a>0$. Then proceed with the M\"obius transformation, reflection, and the inverse M\"obius transformation as in the second case, and then rotate back:
        \begin{equation*}
            z\ra e^{i\theta}z \ra T(e^{i\theta}z)\ra\ol{-T(e^{i\theta}z)}\ra T^{-1}(\ol{-T(e^{i\theta}z)})\ra e^{-i\theta} T^{-1}(\ol{-T(e^{i\theta}z)}).
        \end{equation*}
        In this case,
        \begin{equation*}
            \sigma_\G(z)=e^{-i\theta} T^{-1}(\ol{-T(e^{i\theta}z)}).
        \end{equation*}
    \end{itemize}
\end{definition}
Next, we introduce the definition of polarization using the notion of reflection described above.
\begin{definition}{\textbf{(Polarization).}}
A \emph{polarizer} $\H$ corresponds to one side of a geodesic $\G$. Let $\sigma_\G$ denote the reflection with respect to the geodesic $\G$. For a set $\Omega \subset \disk$, the \emph{polarization} of $\Omega$ with respect to $\H$, denoted by $P_\H(\Omega)$, is defined as:
\begin{equation*}
    P_\H(\Omega) = \big[(\Omega \cup \sigma_\G(\Omega)) \cap \H\big] \cup \big[\Omega \cap \sigma_\G(\Omega)\big].
\end{equation*}
\end{definition}
 % A polarization $P_\H$ preserves the Lebesgue measure and the set inclusions, and it takes open sets to open sets and closed sets to closed sets.

\begin{definition}{\textbf{(Polarization of a function).}}
    For a measurable function $f:\disk \rightarrow \mathbb{R}$, the polarization $P_\H(f)$ with respect to $\H$ is defined as 
   \begin{equation*}
       P_\H(f)(z)=\begin{cases}
           \max\{f(z),f(\sigma_\G(z))\},& \text{   for }z\in \H,\\
           \min\{f(z),f(\sigma_\G(z))\},& \text{   for }z\in \mathbb{D}\setminus \H.
       \end{cases}
   \end{equation*}
   Let $f:\Omega\rightarrow \mathbb{R}$ and let $\Tilde{f}$ be its zero extension to $\disk$. The polarization $P_\H(f)$ is defined as the restriction of $P_\H(\Tilde{f})$ to $P_\H(\Omega)$.
\end{definition} 
Next, we establish the following propositions, which are required for proving the main theorems stated in the Introduction.
\begin{proposition}\label{prop_mobius_preseve_dist}
    Let $\psi$ be a  M\"obius transformation of $\disk$ onto $\disk$, and let $z,w\in\disk$. Then, 
    \begin{equation}
        [z,w]=[\psi(z),\psi(w)].
    \end{equation}
\end{proposition}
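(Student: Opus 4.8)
The plan is to use the classical fact that every M\"obius transformation mapping $\disk$ onto $\disk$ is a disk automorphism, and hence can be written in the canonical form
\[
\psi(z) = e^{i\theta}\,\frac{z-a}{1-\overline{a}\,z},\qquad a\in\disk,\ \theta\in\R,
\]
and then to verify the claimed invariance by a direct computation of the two building blocks $\psi(z)-\psi(w)$ and $1-\overline{\psi(z)}\,\psi(w)$. Since $[z,w]=\left|\frac{z-w}{1-\overline z w}\right|=|\phi_z(w)|$ with $\phi_z$ as in \eqref{eqn_phi_z_w}, it suffices to show that the ratio $\frac{\psi(z)-\psi(w)}{1-\overline{\psi(z)}\,\psi(w)}$ agrees with $\frac{z-w}{1-\overline z w}$ up to a factor of modulus one.

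First I would expand the numerator. A short computation in which the cross terms cancel gives the factorization
\[
\psi(z)-\psi(w) = e^{i\theta}\,\frac{(1-|a|^2)(z-w)}{(1-\overline a z)(1-\overline a w)},
\]
the common factor $1-|a|^2$ surviving. Computing the denominator in the same way, the cross terms $\overline a w$ and $a\overline z$ cancel and leave
\[
1-\overline{\psi(z)}\,\psi(w) = \frac{(1-|a|^2)(1-\overline z w)}{(1-a\overline z)(1-\overline a w)}.
\]

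Dividing the two identities, the factor $1-|a|^2$ and the common denominator factor $1-\overline a w$ cancel, yielding
\[
\frac{\psi(z)-\psi(w)}{1-\overline{\psi(z)}\,\psi(w)}
= e^{i\theta}\,\frac{1-a\overline z}{1-\overline a z}\cdot\frac{z-w}{1-\overline z w}.
\]
The decisive observation is that $1-a\overline z = \overline{1-\overline a z}$, so the prefactor $e^{i\theta}\frac{1-a\overline z}{1-\overline a z}$ is a unimodular constant times a ratio of a complex number to its conjugate, hence has modulus one. Taking absolute values therefore gives $[\psi(z),\psi(w)]=[z,w]$.

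The computations are routine, so the only genuine point requiring care is the reduction to the canonical form of disk automorphisms together with the recognition that the leftover prefactor is unimodular. An alternative, computation-free route would avoid the canonical form entirely: since each $\phi_z$ is an involutive automorphism of $\disk$, the composition $\phi_{\psi(z)}\circ\psi\circ\phi_z$ is an automorphism fixing the origin, hence a rotation by the Schwarz lemma; applying it to $\phi_z(w)$ and taking moduli gives $|\phi_{\psi(z)}(\psi(w))|=|\phi_z(w)|$ at once. I expect the main (and only minor) obstacle to be bookkeeping the conjugates correctly in the denominator computation, so that the unimodular factor emerges cleanly.
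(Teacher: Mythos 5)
Your proof is correct, but it takes a genuinely different route from the paper's. The paper deduces the result from metric invariance: it cites Stoll's theorem that M\"obius transformations of $\disk$ preserve the hyperbolic distance $d_h$, and since $d_h(z,w)=2\tanh^{-1}[z,w]$ with $\tanh^{-1}$ injective on $[0,1)$, the invariance of $d_h$ immediately forces $[z,w]=[\psi(z),\psi(w)]$. You instead reduce to the canonical form $\psi(z)=e^{i\theta}\frac{z-a}{1-\overline{a}\,z}$ of disk automorphisms and verify the claim by direct computation; both of your factorizations check out, namely $\psi(z)-\psi(w)=e^{i\theta}\frac{(1-|a|^2)(z-w)}{(1-\overline{a}z)(1-\overline{a}w)}$ and $1-\overline{\psi(z)}\psi(w)=\frac{(1-|a|^2)(1-\overline{z}w)}{(1-a\overline{z})(1-\overline{a}w)}$, and the leftover prefactor $e^{i\theta}\frac{1-a\overline{z}}{1-\overline{a}z}$ is unimodular because $1-a\overline{z}=\overline{1-\overline{a}z}$. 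What your computation buys is self-containedness (modulo the classical classification of automorphisms of $\disk$, which you should cite or state explicitly) and in fact a slightly stronger conclusion: $\phi_{\psi(z)}(\psi(w))$ equals a unimodular constant times $\phi_z(w)$, not merely that the moduli agree; what the paper's route buys is brevity, at the cost of outsourcing the real work to a cited invariance theorem. Your alternative Schwarz-lemma argument --- that $\phi_{\psi(z)}\circ\psi\circ\phi_z$ is an automorphism of $\disk$ fixing $0$, hence a rotation, so applying it to $\phi_z(w)$ and taking moduli gives the claim --- is also correct (using $\phi_z(0)=z$ and $\phi_{\psi(z)}(\psi(z))=0$) and is arguably the cleanest of the three proofs.
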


% \begin{proposition}\label{prop_mobius_preseve_dist}
%     Let $a\in(-1,1)$. The M\"obius transformation $T(z)=\frac{z-a}{1-az}$, preserves the pseudo-hyperbolic distance.\tr{any mobius transformation preserves...}
% \end{proposition}
\begin{proof}
   \par For $z,w\in\disk$, the hyperbolic distance $d_h(z,w)$ between $z$ and $w$ is
\begin{equation*}
    d_h(z,w)=2\tanh^{-1}\frac{|z-w|}{|1-\ol{z}w|},
\end{equation*}
follows from \cite[p.11]{Stoll_2016}. The M\"{o}bius transformation preserves the hyperbolic distance, as proved in \cite[Theorem 2.2.1, p. 10]{Stoll_2016}. Consequently, $d_h(z,w)=d_h(\psi(z),\psi(w))$, and hence $2\tanh^{-1}[z,w]=2\tanh^{-1}[\psi(z),\psi(w)]$. Hence, the pseudo-hyperbolic distance is invariant under $\psi$, that is  $[z,w]=[\psi(z),\psi(w)]$.
\end{proof}

It is known that if two points $z,w$ lie on the same side of the imaginary axis, then their Euclidean distance satisfies
\[
|z-w|<|z-(-\overline{w})|,
\]
where $-\overline{w}$ denotes the reflection of $w$ with respect to the imaginary axis. In the following proposition, we show that the above inequality is preserved for the pseudohyperbolic distance as well.

\begin{proposition}\label{prop_refle_dist}
Let \(z=a+ib\) and \(w=c+id\) be points in \(\disk\) that lie in the same open half-disk of \(\disk\) determined by the \(y\)-axis, that is,
\(\{\zeta\in\disk : Re (\zeta)>0\}\) or \(\{\zeta\in\disk : Re (\zeta)<0\}\). 
Then
\begin{equation}
        [z,w]<[z,-\ol{w}]. 
    \end{equation}
\end{proposition}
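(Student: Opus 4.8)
The plan is to reduce the whole statement to the standard identity
\begin{equation*}
    1-[z,w]^2=\frac{(1-|z|^2)(1-|w|^2)}{|1-\ol{z}w|^2},
\end{equation*}
which is itself just the one-line computation
\(
|1-\ol{z}w|^2-|z-w|^2=(1+|z|^2|w|^2)-(|z|^2+|w|^2)=(1-|z|^2)(1-|w|^2).
\)
The point of passing to \(1-[\cdot,\cdot]^2\) is that \([z,w]\mapsto [z,w]^2\) and \(t\mapsto 1-t\) are monotone on \([0,1)\), so the target inequality \([z,w]<[z,-\ol{w}]\) is equivalent to \(1-[z,w]^2>1-[z,-\ol{w}]^2\), where I will use that \([z,w],[z,-\ol{w}]\in[0,1)\) because \(z,w,-\ol{w}\in\disk\).

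The key simplification is that reflection across the imaginary axis preserves modulus: \(|-\ol{w}|=|w|\). Hence, applying the identity above to both pairs, the numerators coincide and equal \(N:=(1-|z|^2)(1-|w|^2)>0\), while the denominators are \(|1-\ol{z}w|^2\) for the pair \((z,w)\) and \(|1-\ol{z}(-\ol{w})|^2=|1+\ol{z}\,\ol{w}|^2\) for the pair \((z,-\ol{w})\). Dividing out the common positive factor \(N\), the desired inequality collapses to the purely algebraic comparison
\begin{equation*}
    |1-\ol{z}w|^2<|1+\ol{z}\,\ol{w}|^2 .
\end{equation*}

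I would finish by computing the difference of the two sides directly in coordinates. Writing \(z=a+ib\), \(w=c+id\), one has \(\ol{z}w=(ac+bd)+i(ad-bc)\) and \(\ol{z}\,\ol{w}=(ac-bd)-i(ad+bc)\), so that
\begin{equation*}
    |1+\ol{z}\,\ol{w}|^2-|1-\ol{z}w|^2
    =\bigl[(1+ac-bd)^2-(1-ac-bd)^2\bigr]+\bigl[(ad+bc)^2-(ad-bc)^2\bigr]
    =4ac(1-bd)+4abcd=4ac .
\end{equation*}
Since \(z\) and \(w\) lie in the same open half-disk determined by the \(y\)-axis, the real parts \(a=Re(z)\) and \(c=Re(w)\) are nonzero and of the same sign, so \(ac>0\) and the difference is strictly positive. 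This yields \(|1-\ol{z}w|^2<|1+\ol{z}\,\ol{w}|^2\), and tracing the equivalences back gives \([z,w]<[z,-\ol{w}]\). There is no real obstacle here beyond bookkeeping: the only point requiring care is the direction of the inequality in the reduction step (inverting through \(1-[\cdot]^2\) reverses it), and confirming that the whole argument uses the hypothesis solely through the sign condition \(ac>0\), which is exactly what forces strictness.
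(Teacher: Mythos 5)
Your proof is correct and takes essentially the same route as the paper: both pass through the identity $1-[z,w]^2=\frac{(1-|z|^2)(1-|w|^2)}{|1-\ol{z}w|^2}$, note that the numerators for the pairs $(z,w)$ and $(z,-\ol{w})$ coincide, and reduce the claim to $|1+\ol{z}\,\ol{w}|^2-|1-\ol{z}w|^2>0$, forced by $ac>0$. Incidentally, your coordinate computation yields the correct value $4ac$ for this difference, whereas the paper's display writes $2\,Re(zw+z\ol{w})=2ac$ — a harmless factor-of-two slip (since $Re(zw+z\ol{w})=2ac$) that does not affect the sign or the conclusion.
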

\begin{proof}
From \eqref{eqn_phi_z_w}, one can observe \( [z,w]=|\phi_z(w)| \) and 
\begin{equation}
1-[z,w]^2
= 1-|\phi_z(w)|^2
= \frac{|1-\overline{z}w|^2-|z-w|^2}{|1-\overline{z}w|^2}
= \frac{(1-|z|^2)(1-|w|^2)}{|1-\overline{z}w|^2}.
\end{equation}
Similarly, we have
\begin{equation}
1-[z,-\overline{w}]^2
= 1-|\phi_z(-\overline{w})|^2
= \frac{(1-|z|^2)(1-|w|^2)}{|1+\overline{z}\,\overline{w}|^2}.
\end{equation}
Furthermore,
\begin{align}
|1+\overline{z}\,\overline{w}|^2 - |1-\overline{z}w|^2
&= \bigl(1+|z|^2|w|^2+2\,Re(zw)\bigr)
 - \bigl(1+|z|^2|w|^2-2\,Re(z\overline{w})\bigr) \nonumber\\
&= 2\,Re(zw+z\overline{w})
= 2ac > 0,
\end{align}
since \(z=a+ib\) and \(w=c+id\) lie in the same open half-disk of \(\disk\) determined by the \(y\)-axis. Consequently, $1-[z,-\overline{w}]^2 < 1-[z,w]^2,$
which implies $[z,w] < [z,-\overline{w}].$
\end{proof}

\begin{proposition}\label{prop_reflection}
Let $z, w \in \disk$, and let $\G$ be a geodesic in $\disk$ (either a straight line passing through origin or a circle orthogonal to $\s^1$). Then the reflection $\sigma_\G$ defined in Definition~\ref{def_reflection} has the following properties:
\begin{enumerate}[(i)]
    \item $\sigma_\G\circ \sigma_\G=I$, where $I$ denotes the identity map on $\disk$.
    \item\label{point_2} $\sigma_\G=I$, on $\G$.
    \item $[z, w] = [\sigma_\G(z), \sigma_\G(w)]$.
    \item $[z, \sigma_\G(w)] = [\sigma_\G(z), w]$.
    \item If $z$ and $w$ lie on the same side of $\G$, then $[z, w] < [z, \sigma_\G(w)]$.
\end{enumerate}
\end{proposition}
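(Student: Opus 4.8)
The plan is to reduce all three cases of Definition~\ref{def_reflection} to a single normal form and then read each property off from the corresponding property of the model reflection in the imaginary axis. Writing $\rho(\zeta)=-\ol{\zeta}$ for the Euclidean reflection of $\disk$ across the imaginary axis, the key observation is that in every case $\sigma_\G$ is the conjugate of $\rho$ by a Möbius automorphism $\psi$ of $\disk$, i.e.
\[
\sigma_\G=\psi^{-1}\circ\rho\circ\psi.
\]
Indeed, unwinding Definition~\ref{def_reflection}: for a line through the origin one takes $\psi(z)=e^{i\theta'}z$ with $\theta'=\tfrac{\pi}{2}-\theta$; for a circular geodesic symmetric about the real axis one takes $\psi=T$ with $T(z)=\frac{z-a}{1-az}$; and for an arbitrary circular geodesic one takes $\psi=T\circ R$ with $R(z)=e^{i\theta}z$. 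In each case $\psi$ is a Möbius transformation of $\disk$ onto $\disk$, and by construction $\psi(\G)=I_D$, the imaginary diameter; moreover $\psi$ carries the two sides of $\G$ onto the two half-disks $\disk\cap\{Re(\zeta)>0\}$ and $\disk\cap\{Re(\zeta)<0\}$ --- this is Proposition~\ref{prop_map_T}(v)--(vi) in the two circular cases and a direct rotation argument when $\G$ is a line through the origin.

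With this normal form I would first record three elementary facts about the model reflection $\rho$: (a) $\rho\circ\rho=I$, since $-\ol{-\ol{\zeta}}=\zeta$; (b) $\rho$ fixes $I_D$ pointwise, since $\rho(iy)=iy$; and (c) $\rho$ preserves the pseudo-hyperbolic distance, which is a one-line computation giving $[-\ol{z},-\ol{w}]=\frac{|z-w|}{|1-\ol z w|}=[z,w]$ after using $|1-z\ol w|=|1-\ol z w|$. Properties (i)--(iii) then follow at once from the conjugation identity: (i) is $\psi^{-1}\rho\psi\psi^{-1}\rho\psi=\psi^{-1}(\rho\circ\rho)\psi=I$; (ii) holds because $z\in\G$ gives $\psi(z)\in I_D$, which is fixed by $\rho$ via (b), so $\sigma_\G(z)=\psi^{-1}\psi(z)=z$; and (iii) follows by chaining invariance of $[\cdot,\cdot]$ under $\psi$, $\rho$, $\psi^{-1}$, the first and last by Proposition~\ref{prop_mobius_preseve_dist} and the middle by (c).

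Property (iv) is then a formal consequence: applying (iii) with the second point replaced by $\sigma_\G(w)$ and using (i) gives $[z,\sigma_\G(w)]=[\sigma_\G(z),\sigma_\G(\sigma_\G(w))]=[\sigma_\G(z),w]$. For the strict inequality (v) I would transport everything to the imaginary-axis model: since $z,w$ lie on the same side of $\G$, the points $\psi(z),\psi(w)$ lie in the same open half-disk, so Proposition~\ref{prop_refle_dist} gives $[\psi(z),\psi(w)]<[\psi(z),-\ol{\psi(w)}]=[\psi(z),\rho(\psi(w))]$. Using Proposition~\ref{prop_mobius_preseve_dist} together with $\psi(\sigma_\G(w))=\rho(\psi(w))$, the left-hand side equals $[z,w]$ and the right-hand side equals $[z,\sigma_\G(w)]$, which yields (v).

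The only genuine work --- and the main obstacle --- lies in carefully verifying the conjugation normal form and the side-correspondence uniformly across the three cases of Definition~\ref{def_reflection}, in particular pinning down $\psi$ in each case and confirming $\psi(\G)=I_D$ via Proposition~\ref{prop_map_T}. Once this bookkeeping is in place, properties (i)--(v) collapse to the three transparent facts about $\rho$ and the already-established invariance of the pseudo-hyperbolic distance under Möbius maps.
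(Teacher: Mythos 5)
Your proposal is correct and takes essentially the same route as the paper: the paper's Definition~\ref{def_reflection} is already the conjugation $\sigma_\G=\psi^{-1}\circ\rho\circ\psi$ you isolate (with $\rho(\zeta)=-\ol{\zeta}$ and $\psi=T\circ R$), and the paper likewise proves (i) by unwinding the conjugation, (iii) by chaining Proposition~\ref{prop_mobius_preseve_dist} with the invariances $[z,w]=[-z,-w]=[\ol{z},\ol{w}]$, (iv) formally from (i) and (iii), and (v) by transporting to the half-disk model and invoking Proposition~\ref{prop_refle_dist}, exactly as you do. The only local divergence is item (ii), where you deduce pointwise fixing of $\G$ from $T(\G)=I_D$ (Proposition~\ref{prop_map_T}, part~\ref{point_1}) together with $\rho$ fixing the imaginary diameter pointwise, whereas the paper verifies it by a direct computation with the parametrization $w=z_0+re^{i\eta}$ and the identity $z_0^2-1=r^2$; your shortcut is valid and slightly cleaner.
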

\begin{proof}
    If the geodesic is a straight line passing through origin, then the  above properties are immediate. Hence, we prove the case where the geodesic $\G$ is a circle orthogonal to $\mathbb{S}^1$. Choose an angle $\theta$ such that $e^{i\theta}\G$ is symmetric with respect to the real axis, and $e^{i\theta}\G \cap \{\text{real axis}\} = \{(a, 0)\}$, where $a>0$. The reflection across $\G$ is given by 
        \begin{equation*}
            \sigma_\G(z)=e^{-i\theta} T^{-1}(\ol{-T(e^{i\theta}z)}),\quad\text{where }\quad T(z)=\frac{z-a}{1-az}.
        \end{equation*}

    \begin{enumerate}[(i)]
        \item \begin{align}
            \sigma_\G\circ \sigma_\G(z)&=e^{-i\theta} T^{-1}(\ol{-T(e^{i\theta}\sigma_\G(z))})=e^{-i\theta} T^{-1}(\ol{-T( T^{-1}(\ol{-T(e^{i\theta}z)}))})\\
            &=e^{-i\theta} T^{-1}(\ol{-(\ol{-T(e^{i\theta}z)})})=e^{-i\theta} T^{-1}(T(e^{i\theta}z))=z.
        \end{align}

        \item Let $\mathcal G':=e^{i\theta}\G = C \cap \mathbb D$, where $C$ is the Euclidean circle with centre $z_0 = \frac{1+a^2}{2a}$ and radius $r = \frac{1-a^2}{2a}$, as defined on Page \pageref{page_circle}. Let $z \in \mathcal G$ and let $w = e^{i\theta} z$. Then $w$ can be written in the form $w = z_0 + r e^{i\eta}$. Recall that 
        \[
        T(w) = \frac{w-a}{1-aw},
        \;\text{ and }\;
        T^{-1}(w) = \frac{w+a}{1+aw}.
        \]
% We compute
% \[
% \overline{T(w)}
% = \frac{\overline{w}-a}{1-a\overline{w}},
% \qquad
% -\overline{T(w)} = \frac{a-\overline{w}}{1-a\overline{w}}.
% \]
A direct computation yields
\begin{align*}
T^{-1}\!\left(-\overline{T(w)}\right)
&= \frac{\left(\dfrac{a-\overline{w}}{1-a\overline{w}}\right) + a}
        {1 + a \left(\dfrac{a-\overline{w}}{1-a\overline{w}}\right)} = \frac{a-\overline{w} + a(1-a\overline{w})}
        {1-a\overline{w} + a(a-\overline{w})}\\
        &= \frac{2a - \overline{w}(1+a^2)}
        {(1+a^2) - 2a\overline{w}}
= \frac{1 - \overline{w} z_0}{z_0 - \overline{w}}.
\end{align*}
Since $z_0$ is real, it follows that
\begin{equation}
    \overline{w} = z_0 + r e^{-i\eta},\text{ and }z_0 - \overline{w} = -r e^{-i\eta}.
\end{equation}
Therefore,
\begin{align*}
T^{-1}\!\left(-\overline{T(w)}\right)
&= \frac{1 - z_0(z_0 + r e^{-i\eta})}{-r e^{-i\eta}} = \frac{z_0^2 - 1 + z_0 r e^{-i\eta}}{r e^{-i\eta}}.
\end{align*}
Using
\[
z_0^2 - 1
= \left(\frac{1+a^2}{2a}\right)^2 - 1
= \left(\frac{1-a^2}{2a}\right)^2
= r^2,
\]
we obtain
\[
T^{-1}\!\left(-\overline{T(w)}\right)
= z_0 + r e^{i\eta}=w=e^{i\theta }z.
\]
Hence,
\[
\sigma_\G(z)=e^{-i\theta} T^{-1}(\ol{-T(e^{i\theta}z)})=e^{-i\theta} e^{i\theta }z=z
\]

        \item The map $\sigma_\G$ is obtained as a composition of M\"obius transformations and a reflection. Hence, by Proposition~\ref{prop_mobius_preseve_dist}, together with the invariance properties $$[z,w]=[-z,-w]=[\ol{z},\ol{w}],$$ it follows that 
            \begin{align*}
            [\sigma_{\mathcal G}(z),\sigma_{\mathcal G}(w)]
            &= \bigl[e^{-i\theta} T^{-1}\!\bigl(-\,\overline{T(e^{i\theta}z)}\bigr),
                    e^{-i\theta} T^{-1}\!\bigl(-\,\overline{T(e^{i\theta}w)}\bigr)\bigr] \\
            &= \bigl[-\,\overline{T(e^{i\theta}z)},-\,\overline{T(e^{i\theta}w)}\bigr] = \bigl[T(e^{i\theta}z),T(e^{i\theta}w)\bigr] = [z,w].
            \end{align*}

        \item This follows directly from $(i)$ and $(iii)$:
                \begin{align}
                [z,\sigma_{\G}(w)]
                = [\sigma_{\G}(z),\sigma_{\G}(\sigma_{\G}(w))]
                = [\sigma_{\G}(z),w].
                \end{align}

        % \item Using Proposition~\ref{prop_mobius_preseve_dist} and the fact that $[z,w] = [\overline{z}, \overline{w}]=[-z,-w]$, we obtain
        %     \begin{align}
        %         [z, \sigma_\G(w)]&=[z,e^{-i\theta} T^{-1}(\ol{-T(e^{i\theta}w)})]=[e^{i\theta}z,T^{-1}(\ol{-T(e^{i\theta}w)})]\\
        %         &=[T(e^{i\theta}z),\ol{-T(e^{i\theta}w)}]=[\ol{-T(e^{i\theta}z)},T(e^{i\theta}w)]\\
        %         &=[T^{-1}(\ol{-T(e^{i\theta}z)}),e^{i\theta}w]=[e^{-i\theta} T^{-1}(\ol{-T(e^{i\theta}z)}),w]=[ \sigma_\G(z),w].
        %     \end{align}

        \item Let $z$ and $w$ lie on the same side of $\G$. Then $T(e^{i\theta} z)$ and $T(e^{i\theta} w)$ lie on the same side of the corresponding half-disk of $\mathbb{D}$, determined by the imaginary axis. Then from Proposition \ref{prop_refle_dist}, it follows that
            \begin{align}
                [z, \sigma_\G(w)]&=[z,e^{-i\theta} T^{-1}(\ol{-T(e^{i\theta}w)})]=[T(e^{i\theta}z),\ol{-T(e^{i\theta}w)}]\\
                &>[T(e^{i\theta}z),T(e^{i\theta}w)]=[z,w].
            \end{align}
    \end{enumerate}
\end{proof}
In $\R^2$, let $H$ be a polarizer and $\partial H$ be its boundary line.
The reflection with respect to $\partial H$ can be described geometrically as follows.
Given a point $x\in H$, consider the line through $x$ that is orthogonal to $\partial H$,
and let $y$ be its intersection point with $\partial H$.
There exists a unique point $x'\in \ol{H}^c$ on this orthogonal line such that
\[
|x-y| = |x'-y|.
\]
This point $x'$ is precisely the reflection of $x$ with respect to $\partial H$.
An analogous interpretation holds in the Poincar\'e disk model:
reflection across a geodesic can be obtained by moving points along geodesics
orthogonal to the given geodesic while preserving pseudo-hyperbolic distance.
This interpretation of $\sigma_\G$ is established in the following proposition.

\begin{proposition}
Let $\mathcal G$ and $\mathcal G'$ be geodesics in $\mathbb{D}$ such that $\mathcal G$ and $\mathcal G'$ intersect orthogonally. Then the following hold:
\begin{enumerate}[(i)]
    \item $\sigma_{\mathcal G}(\mathcal G') = \mathcal G'$.
    \item $[\sigma_{\mathcal G}(z), w] = [z, w]$, for each $z \in \disk$ and $w\in\G$.
\end{enumerate}

\begin{proof}
\hfill\\
    \begin{enumerate}[(i)]    
        \item Let $\G\cap\G'=\{w\}$.  Choose an angle $\theta$ such that $e^{i\theta}\G$ is symmetric with respect to the real axis and $e^{i\theta}\G \cap \{\text{real axis}\} = \{(a, 0)\}$, where $a>0$. Since $\mathcal G \perp \mathcal G'$, and Möbius transformations preserve angles, it follows that $T\!\left(e^{i\theta}\mathcal G\right) \perp T\!\left(e^{i\theta}\mathcal G'\right)$. By Proposition~\ref{prop_map_T}, part~\ref{point_1}, we have that
$T\!\left(e^{i\theta}\mathcal G\right)$ is the imaginary diameter, $I_D$.
Since a circle orthogonal to a line must have its centre on that line,
it follows that $T\!\left(e^{i\theta}\mathcal G'\right)$ is a circle with centre on the imaginary axis.
Consequently, $T\!\left(e^{i\theta}\mathcal G'\right)$ is symmetric with respect to the imaginary axis, and hence    $T\!\left(e^{i\theta}\mathcal G'\right)=-\overline{T\!\left(e^{i\theta}\mathcal G'\right)}$. Applying $T^{-1}$ and multiplying by $e^{-i\theta}$, we obtain
\[
e^{-i\theta} T^{-1}\!\left(T\!\left(e^{i\theta}\mathcal G\right)\right)
\perp
e^{-i\theta} T^{-1}\!\left(-\overline{T\!\left(e^{i\theta}\mathcal G'\right)}\right), \;\text{ that is }\;
\mathcal G \perp \sigma_{\mathcal G}(\mathcal G').
\]
% In particular,
% \[
% \sigma_{\mathcal G}(\mathcal G') \perp \mathcal G',
% \qquad
% \mathcal G \perp \sigma_{\mathcal G}(\mathcal G').
% \]
On the other hand, we already know that $\mathcal G \perp \mathcal G'$ and that
$w = \sigma_{\mathcal G}(w) \in \mathcal G \cap \mathcal G' \cap \sigma_{\mathcal G}(\mathcal G')$
(see Proposition~\ref{prop_reflection}, part~\ref{point_2}). Let $\phi$ be the M\"obius transformation of $\mathbb D$ such that
\[
\phi(\mathcal G) = \text{Imaginary axis}\cap \disk=I_D,
\quad\text{and}\quad 
\phi(w) = 0.
\]
Then the image geodesics $\phi(\mathcal G')$ and
$\phi\!\left(\sigma_{\mathcal G}(\mathcal G')\right)$ are geodesics in $\mathbb D$
passing through $0$ and orthogonal to the imaginary diameter $I_D=\{0+ai:-1<a<1\}$.
In the Poincaré disk, there is a unique geodesic through $0$ orthogonal to $I_D$,
namely the real diameter $R_D=\{a+0i:-1<a<1\}$. Hence,
\[
\phi(\mathcal G')
=
\phi\!\left(\sigma_{\mathcal G}(\mathcal G')\right)
=
R_D.
\]
Applying $\phi^{-1}$, we conclude that
\[
\mathcal G' = \sigma_{\mathcal G}(\mathcal G').
\]

        \item Since $w \in \mathcal G$ and $\sigma_{\mathcal G}(w)=w$ (see \ref{point_2} of Proposition \ref{prop_reflection}), it follows that
        \begin{equation}
            [\sigma_{\mathcal G}(z), w]=[z,\sigma_\G(w)]=[z,w] .
        \end{equation}
    \end{enumerate}
\end{proof}
\end{proposition}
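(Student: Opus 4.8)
The proposition has two parts, and the plan is to dispose of part (ii) first, since it follows immediately from properties already established. Indeed, for any $z\in\disk$ and $w\in\mathcal G$, part (iv) of Proposition~\ref{prop_reflection} gives $[\sigma_{\mathcal G}(z),w]=[z,\sigma_{\mathcal G}(w)]$, and because $\sigma_{\mathcal G}$ fixes every point of $\mathcal G$ (part~\ref{point_2} of Proposition~\ref{prop_reflection}), we have $\sigma_{\mathcal G}(w)=w$, whence $[\sigma_{\mathcal G}(z),w]=[z,w]$. So all the genuine content lies in part (i).

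For part (i), my plan is to exploit that $\sigma_{\mathcal G}$ is a hyperbolic isometry and then invoke the uniqueness of the perpendicular geodesic through a point. First I would observe that $\sigma_{\mathcal G}$ preserves the pseudo-hyperbolic metric (Proposition~\ref{prop_reflection}, part~(iii)); since $d_h(z,w)=2\tanh^{-1}[z,w]$ with $\tanh^{-1}$ strictly increasing, $\sigma_{\mathcal G}$ preserves $d_h$ as well, and being an involution it is a distance-preserving bijection, hence an isometry of $(\disk,d_h)$. In particular it carries geodesics to geodesics and preserves angles. Let $w_0=\mathcal G\cap\mathcal G'$ be the orthogonal intersection point. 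Since $w_0\in\mathcal G$ it is fixed by $\sigma_{\mathcal G}$, so $w_0=\sigma_{\mathcal G}(w_0)\in\sigma_{\mathcal G}(\mathcal G')$; moreover $\sigma_{\mathcal G}(\mathcal G)=\mathcal G$ (again by part~\ref{point_2}), so angle preservation yields $\sigma_{\mathcal G}(\mathcal G')\perp\mathcal G$. Thus both $\mathcal G'$ and $\sigma_{\mathcal G}(\mathcal G')$ are geodesics passing through $w_0$ and orthogonal to $\mathcal G$.

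The remaining step, which forces $\sigma_{\mathcal G}(\mathcal G')=\mathcal G'$, is the uniqueness of such a geodesic, and this is where I would be most careful. I would transport the configuration to standard position by choosing a M\"obius transformation $\phi$ of $\disk$ with $\phi(\mathcal G)=I_D$ and $\phi(w_0)=0$; then $\phi(\mathcal G')$ and $\phi(\sigma_{\mathcal G}(\mathcal G'))$ are both geodesics through the origin orthogonal to $I_D$, and since geodesics through $0$ are Euclidean diameters with exactly one, the real diameter $R_D$, orthogonal to $I_D$, the two images coincide; applying $\phi^{-1}$ gives the claim. The main obstacle I anticipate is justifying, at the elementary level of this paper, that $\sigma_{\mathcal G}$ genuinely sends geodesics to geodesics and preserves orthogonality: one may either appeal to the isometry property as above, or argue directly from the explicit formula $\sigma_{\mathcal G}(z)=e^{-i\theta}T^{-1}(\overline{-T(e^{i\theta}z)})$, checking that each constituent (the rotation, the M\"obius map $T$, and the anticonformal reflection across the imaginary axis) maps circles orthogonal to $\s^1$ to circles orthogonal to $\s^1$ and preserves the magnitude of intersection angles. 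This conformality bookkeeping is routine but is where the real work sits.
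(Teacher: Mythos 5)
Your proof is correct, and much of it coincides with the paper's: part (ii) is verbatim the paper's argument, and your closing uniqueness step for (i) — transporting the configuration by a M\"obius map $\phi$ with $\phi(\G)=I_D$ and $\phi(w_0)=0$, noting both $\phi(\G')$ and $\phi(\sigma_\G(\G'))$ are geodesics through $0$ orthogonal to $I_D$, hence both equal the real diameter $R_D$ — is exactly the paper's. Where you genuinely differ is the first half of (i). The paper proves $\sigma_\G(\G')\perp\G$ by explicit computation: it applies the map $T$ of Proposition~\ref{prop_map_T} to send $e^{i\theta}\G$ to $I_D$, uses the Euclidean fact that a circle orthogonal to a line has its centre on that line to conclude $T(e^{i\theta}\G')$ is symmetric about the imaginary axis, i.e.\ $T(e^{i\theta}\G')=-\overline{T(e^{i\theta}\G')}$ — an identity which, incidentally, already yields $\sigma_\G(\G')=\G'$ outright, so the paper's subsequent uniqueness step is strictly speaking a second confirmation. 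You instead derive from Proposition~\ref{prop_reflection}(iii) that $\sigma_\G$ preserves $[\cdot,\cdot]$, hence $d_h=2\tanh^{-1}[\cdot,\cdot]$, and invoke geodesic and angle preservation by isometries; this buys uniformity, since your argument treats diameters and circular-arc geodesics at once, whereas the paper's computation is written only for the circular-arc case. The one step you rightly flag is angle preservation: distance preservation alone does give that $\sigma_\G$ maps geodesics to geodesics (points realizing equality in the triangle inequality lie on the unique geodesic segment), but preservation of the \emph{magnitude of angles} needs either a Myers--Steenrod-type smoothness argument or, more in the paper's elementary spirit, the observation that $\sigma_\G(z)=e^{-i\theta}T^{-1}\bigl(\overline{-T(e^{i\theta}z)}\bigr)$ is a composition of rotations, M\"obius maps, and conjugation, hence anticonformal — your stated fallback, which closes this point and is essentially the paper's ``M\"obius transformations preserve angles'' step in disguise.
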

\begin{proposition}
    Let $\mathcal{G}$ be a geodesic in $\mathbb{D}$. Then the modulus of determinant of Jacobian of the reflection map $\sigma_\G$ is 
    \begin{equation}
        |J_{\sigma_\G}(z)|=\frac{|1-|\sigma_\G(z)|^2|^2}{|1-|z|^2|^2}.
    \end{equation}
\end{proposition}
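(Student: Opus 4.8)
The plan is to exploit the fact that the reflection $\sigma_\G$ is assembled entirely out of maps that preserve the hyperbolic area form, and to propagate the desired identity through the composition by the multiplicativity of Jacobian determinants. Recall from Definition~\ref{def_reflection} that, in each of the three cases, $\sigma_\G$ is a finite composition of the rotations $z\mapsto e^{\pm i\theta}z$, the Möbius automorphism $T(z)=\frac{z-a}{1-az}$ together with its inverse $T^{-1}$, the map $z\mapsto -z$, and a single complex conjugation $z\mapsto\overline{z}$. By Proposition~\ref{prop_map_T}(iii) each of these factors maps $\disk$ into $\disk$, so all intermediate points remain in $\disk$, and each factor is a bijective isometry of $(\disk,d_h)$.

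The first step is a per-factor identity: I would show that every such factor $f$ satisfies
\[
|J_{f}(p)|=\frac{\bigl(1-|f(p)|^2\bigr)^2}{\bigl(1-|p|^2\bigr)^2}.
\]
For a Möbius automorphism $\psi$ of $\disk$, written as $\psi(z)=e^{i\alpha}\frac{z-b}{1-\overline{b}z}$ with $|b|<1$, a one-line computation gives $|\psi'(z)|=\frac{1-|b|^2}{|1-\overline{b}z|^2}$ and $1-|\psi(z)|^2=\frac{(1-|b|^2)(1-|z|^2)}{|1-\overline{b}z|^2}$, which is precisely the Schwarz--Pick equality $|\psi'(z)|=\frac{1-|\psi(z)|^2}{1-|z|^2}$. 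Since $\psi$ is holomorphic its real Jacobian determinant equals $|\psi'(z)|^2$, so the displayed identity holds; the rotations and $z\mapsto -z$ are special cases. For the conjugation $c(z)=\overline{z}$ one has $|J_{c}(z)|=1$ and $|c(z)|=|z|$, so the identity holds trivially.

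The second step is to telescope. Writing $\sigma_\G=f_k\circ\cdots\circ f_1$ with $p_0=z$ and $p_j=f_j(p_{j-1})$, the chain rule for Jacobian determinants — which tracks only moduli and is therefore insensitive to the orientation reversal caused by the conjugation factor — yields
\[
|J_{\sigma_\G}(z)|=\prod_{j=1}^{k}\bigl|J_{f_j}(p_{j-1})\bigr|
=\prod_{j=1}^{k}\frac{\bigl(1-|p_j|^2\bigr)^2}{\bigl(1-|p_{j-1}|^2\bigr)^2}
=\frac{\bigl(1-|p_k|^2\bigr)^2}{\bigl(1-|p_0|^2\bigr)^2}
=\frac{\bigl(1-|\sigma_\G(z)|^2\bigr)^2}{\bigl(1-|z|^2\bigr)^2}.
\]
Because $|z|<1$ and $|\sigma_\G(z)|<1$, the factors $1-|z|^2$ and $1-|\sigma_\G(z)|^2$ are positive, so the right-hand side equals $\frac{|1-|\sigma_\G(z)|^2|^2}{|1-|z|^2|^2}$, as claimed.

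I do not expect a genuine obstacle here; the only points demanding care are recording the Schwarz--Pick relation in its sharp (automorphism) form rather than as an inequality, and confirming that the single conjugation factor does not disrupt the telescoping, which is settled by working exclusively with moduli of determinants. As a sanity check one may note that the whole argument is the coordinate form of the statement that the hyperbolic isometry $\sigma_\G$ preserves the volume form $\frac{dA}{(1-|z|^2)^2}$, i.e.\ $\frac{|J_{\sigma_\G}(z)|}{(1-|\sigma_\G(z)|^2)^2}=\frac{1}{(1-|z|^2)^2}$; I would nonetheless present the telescoping computation as the main proof, since it relies only on the tools already developed in this section.
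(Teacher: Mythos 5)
Your proposal is correct and takes essentially the same route as the paper: both decompose $\sigma_\G$ into rotations, the conjugation, and the M\"obius factors $-T$ and $T^{-1}$, apply the chain rule for Jacobians, and telescope using the identity $|J_\psi(z)|=\frac{\left(1-|\psi(z)|^2\right)^2}{\left(1-|z|^2\right)^2}$ for M\"obius transformations, which the paper cites from Stoll (Theorem 3.3.1) rather than deriving via the sharp Schwarz--Pick equality as you do. The only cosmetic difference is that you verify this per-factor identity uniformly for every factor, whereas the paper assigns unit Jacobian modulus to the rotations and the conjugation and applies the M\"obius formula only to the remaining two factors.
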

\begin{proof}
    By the chain rule for the Jacobian and the fact that rotations and reflections have Jacobian determinant of modulus one, we obtain
    \begin{align}
        |J_{\sigma_\G}(z)|&=|J_{T^{-1}}(\ol{-T(e^{i\theta}z)})|\cdot|J_{-T}(e^{i\theta }z)|.
    \end{align}
    For a M\"{o}bius transformation $\psi$, recall from \cite[Theorem 3.3.1, p.23]{Stoll_2016} that
    \begin{equation}\label{eqn_det_jac_mob}
       |J_{\psi}(z)|=\frac{|1-|\psi(z)|^2|^2}{|1-|z|^2|^2}.
    \end{equation}
    Therefore, 
    \begin{align}\label{eqn_det_jac_sigma_g}
        |J_{\sigma_\G}(z)|&=\frac{|1-|T^{-1}(\ol{-T(e^{i\theta}z)})|^2|^2}{|1-|\ol{-T(e^{i\theta}z)}|^2|^2}\cdot \frac{|1-|-T(e^{i\theta}z)|^2|^2}{|1-|e^{i\theta}z|^2|^2}\nonumber\\
        &=\frac{|1-|T^{-1}(\ol{-T(e^{i\theta}z)})|^2|^2}{|1-|z|^2|^2}=\frac{|1-|\sigma_\G(z)|^2|^2}{|1-|z|^2|^2}.
    \end{align}
\end{proof}

\begin{proposition}\label{prop_int_reflection}
    Let $E\subseteq \disk$, $f\in L^2(\disk)$, and let $\mathcal{G}$ be a geodesic in $\mathbb{D}$. Then,
     \begin{align}
        \int_{\sigma_\G(E)}f(z)\dz=\int_{E}f(\sigma_\G (z)) \dz.
    \end{align}
\end{proposition}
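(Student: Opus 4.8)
The plan is to reduce the claim to the classical Euclidean change-of-variables theorem, using that $\sigma_\G$ is a smooth involution of $\disk$ together with the Jacobian formula established just above. First I would unfold the measure, writing $\dz=\frac{1}{\pi}\frac{dA(z)}{(1-|z|^2)^2}$ with $dA$ the Euclidean area measure, so that the left-hand side becomes $\frac{1}{\pi}\int_{\sigma_\G(E)}\frac{f(z)}{(1-|z|^2)^2}\,dA(z)$.

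Next I would apply the substitution $z=\sigma_\G(y)$. Since $\sigma_\G$ is a composition of M\"obius transformations with complex conjugation, it is a $C^1$ diffeomorphism of $\disk$ onto itself, so the Euclidean change-of-variables theorem applies and $dA(z)=|J_{\sigma_\G}(y)|\,dA(y)$. By Proposition~\ref{prop_reflection}(i), $\sigma_\G$ is an involution, hence $\sigma_\G^{-1}=\sigma_\G$ and the image $\sigma_\G(E)$ is pulled back to $\sigma_\G(\sigma_\G(E))=E$; the integral thus becomes $\frac{1}{\pi}\int_{E}\frac{f(\sigma_\G(y))}{(1-|\sigma_\G(y)|^2)^2}\,|J_{\sigma_\G}(y)|\,dA(y)$.

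The decisive step is to substitute the Jacobian formula from the preceding proposition. Because $y,\sigma_\G(y)\in\disk$, both $1-|y|^2$ and $1-|\sigma_\G(y)|^2$ are positive, so $|J_{\sigma_\G}(y)|=\frac{(1-|\sigma_\G(y)|^2)^2}{(1-|y|^2)^2}$, and the factor $(1-|\sigma_\G(y)|^2)^2$ cancels exactly against the denominator coming from the measure evaluated at $\sigma_\G(y)$. What survives is $\frac{1}{\pi}\int_E \frac{f(\sigma_\G(y))}{(1-|y|^2)^2}\,dA(y)$, and recognizing that $\frac{1}{\pi}\frac{dA(y)}{(1-|y|^2)^2}$ is precisely the measure $\dz$ (after relabeling the dummy variable), this equals the right-hand side $\int_E f(\sigma_\G(z))\,\dz$.

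I do not anticipate a serious obstacle: the content is entirely the cancellation of the conformal factors, engineered exactly by the Jacobian formula, so the two conformal weights at the point $\sigma_\G(y)$ annihilate one another and leave the weight at $y$. The only points needing a word of justification are that $\sigma_\G$ is a bona fide $C^1$ diffeomorphism of $\disk$ (immediate from its explicit description as rotations, the M\"obius map $T$, and conjugation) so that the change of variables is legitimate, and the involution identity that identifies the transformed domain as $E$. Measurability and integrability of the transformed integrand cause no difficulty, since $f\in L^2(\disk)$ and $\sigma_\G$ preserves the relevant measure class.
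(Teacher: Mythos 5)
Your proposal is correct and follows essentially the same route as the paper's proof: unfold the hyperbolic measure into its Euclidean form, change variables via $z=\sigma_\G(w)$ using the involution property to identify the transformed domain as $E$, and invoke the Jacobian formula \eqref{eqn_det_jac_sigma_g} so that the conformal factor at $\sigma_\G(w)$ cancels. Your additional remarks on $\sigma_\G$ being a $C^1$ diffeomorphism and on measurability simply make explicit what the paper leaves implicit.
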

\begin{proof}
     \begin{align}
        \int_{\sigma_\G(E)}f(z)\dz&=\frac{1}{\pi}\int_{\sigma_\G(E)}f(z)  \frac{dA(z)}{(1-|z|^2)^2}\\
        &=\frac{1}{\pi}\int_{E} \frac{f(\sigma_\G (w))}{(1-|\sigma_\G (w))|^2}|J_{\sigma_\G}(w)|dA(w)\\
        &= \frac{1}{\pi}\int_{E}f(\sigma_\G (w)) \frac{dA(w)}{(1-|w|^2)^2}\quad\text{ (follows from }\eqref{eqn_det_jac_sigma_g})\\
        &=\int_{E}f(\sigma_\G (z)) \dz.
    \end{align}
\end{proof}

\begin{remark}
    Let $\mathcal{G}$ be a geodesic in $\mathbb{D}$ and let $\mathcal{H}$ be an associated polarizer. Define 
    $\widetilde{\H}: = \sigma_\G(\H)$. Then 
    \begin{equation}
        \disk=\H\sqcup\widetilde{\H}\sqcup\G\text{ and }\disk\setminus\H= \widetilde{\H}\sqcup\G.
    \end{equation}    
     For integrals over $\disk\setminus\H$, it suffices to integrate over $\widetilde{H}$, since $\G$ has zero hyperbolic and Lebesgue measure.
\end{remark}

\begin{proposition}\label{prop_pol_properties}
Let $f\in L^{2}(\disk)$, and let $E \subset \mathbb{D}$ be a measurable set. Let $\mathcal{G}$ be a geodesic in $\mathbb{D}$ and let $\mathcal{H}$ be an associated polarizer. Then the following properties hold:
\begin{enumerate}[(i)]
    \item
    \begin{equation}
                    \int_\disk (P_\H(f(z)))^2\dz= \int_\disk (f(z))^2\dz.
                \end{equation}

    \item
    If $f\ge 0$ and $f=0$ on $E^{c}$, then $P_{\mathcal{H}}f=0$ on $P_{\mathcal{H}}(E)^{c}$.

    \item
    If $f\ge 0$, then
     \begin{equation}
                    \int_{P_\H(E)} P_\H(f(z))^2\dz= \int_E f(z)^2\dz.
                \end{equation}
\end{enumerate}
\end{proposition}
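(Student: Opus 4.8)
The plan is to reduce all three parts to the change-of-variables formula of Proposition~\ref{prop_int_reflection}, the decomposition $\disk=\H\sqcup\widetilde\H\sqcup\G$ (where $\widetilde\H=\sigma_\G(\H)$ and $\G$ has measure zero), and the involution property $\sigma_\G\circ\sigma_\G=I$ from Proposition~\ref{prop_reflection}(i). The one elementary fact I will use repeatedly is that $\max\{a,b\}^2+\min\{a,b\}^2=a^2+b^2$ for all reals $a,b$.

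For part (i), I would split $\int_\disk(P_\H f)^2\dz=\int_\H(P_\H f)^2\dz+\int_{\widetilde\H}(P_\H f)^2\dz$, the set $\G$ being negligible. On $\H$ the integrand equals $\max\{f(z),f(\sigma_\G(z))\}^2$. For the piece over $\widetilde\H=\sigma_\G(\H)$, Proposition~\ref{prop_int_reflection} pulls it back to $\int_\H (P_\H f)(\sigma_\G(z))^2\dz$; since $z\in\H$ forces $\sigma_\G(z)\in\widetilde\H$, the definition of $P_\H f$ together with $\sigma_\G\circ\sigma_\G=I$ shows $(P_\H f)(\sigma_\G(z))=\min\{f(z),f(\sigma_\G(z))\}$. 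Adding the two contributions over $\H$ and applying $\max^2+\min^2=f(z)^2+f(\sigma_\G(z))^2$ yields $\int_\H\big(f(z)^2+f(\sigma_\G(z))^2\big)\dz$; a final use of Proposition~\ref{prop_int_reflection} rewrites $\int_\H f(\sigma_\G(z))^2\dz$ as $\int_{\widetilde\H}f^2\dz$, and the two halves recombine to $\int_\disk f^2\dz$.

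For part (ii), I would argue by contraposition. Since $f\ge0$ gives $P_\H f\ge0$, it is enough to show $P_\H f(z)>0\Rightarrow z\in P_\H(E)$. If $z\in\H$, then $\max\{f(z),f(\sigma_\G(z))\}>0$ forces $f(z)>0$ or $f(\sigma_\G(z))>0$; as $f$ vanishes on $E^c$, this gives $z\in E$ or $\sigma_\G(z)\in E$, i.e.\ $z\in(E\cup\sigma_\G(E))\cap\H\subseteq P_\H(E)$. If $z\in\widetilde\H$, then $\min\{f(z),f(\sigma_\G(z))\}>0$ forces both values positive, hence $z\in E$ and $z\in\sigma_\G(E)$, so $z\in E\cap\sigma_\G(E)\subseteq P_\H(E)$.

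Finally, part (iii) is immediate from (i) and (ii): reading $P_\H f$ through the zero-extension convention of the preceding definition, let $\tilde f$ denote the zero extension of $f$ to $\disk$, which is nonnegative and vanishes on $E^c$, so (ii) applies and gives $P_\H\tilde f=0$ off $P_\H(E)$. Hence $\int_{P_\H(E)}(P_\H f)^2\dz=\int_\disk(P_\H\tilde f)^2\dz$, and by (i) this equals $\int_\disk\tilde f^2\dz=\int_E f^2\dz$. I expect no serious obstacle; the only care needed is bookkeeping — that $\sigma_\G$ interchanges $\H$ and $\widetilde\H$ so the $\max$/$\min$ branches swap correctly under the change of variables, and that $P_\H f$ in (iii) is understood via zero extension so that (ii) may be invoked.
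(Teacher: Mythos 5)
Your proposal is correct and follows essentially the same route as the paper: part (i) uses the identical $\H$/$\widetilde{\H}$ decomposition with Proposition~\ref{prop_int_reflection} (the paper expresses your $\max^2+\min^2$ identity by noting that for $z\in\H$ one of $P_\H(f)(z)$, $P_\H(f)(\sigma_\G(z))$ equals $f(z)$ and the other equals $f(\sigma_\G(z))$), part (ii) is the paper's direct computation of $P_\H(E)^c$ recast as a contrapositive, and part (iii) is stated in the paper simply as ``follows from (i) and (ii),'' which you spell out via the zero extension. The only microscopic omission is the case $z\in\G$ in part (ii), but since $\G\subset\disk\setminus\H$ and $\sigma_\G=I$ on $\G$, your min-branch argument covers it verbatim.
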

\begin{proof}
\hfill\\
    \begin{enumerate}[(i)]
        \item We have
    \begin{align}
        \int_\disk P_\H(f(z))^2\dz&=\int_\H P_\H(f(z))^2\dz+\int_{\disk\setminus\H} P_\H(f(z))^2\dz
    \end{align}
    For any $f\in L^2(E)$, we have
    \begin{align}
        \int_{\widetilde{\H}}f(z)^2\dz=\int_{\H}f(\sigma_\G (z))^2 \dz,
    \end{align}
    which follows from Proposition~\ref{prop_int_reflection}.
    % \begin{align}
    %     \int_{\ol{\H}} P_\H(u(z))^2\dz&=\frac{1}{\pi}\int_{\ol{\H}} P_\H(u(z))^2  \frac{dA(z)}{(1-|z|^2)^2}\\
    %     &=\frac{1}{\pi}\int_{\H} \frac{P_\H(u(\sigma_\G (w)))^2}{(1-|\sigma_\G (w))|^2}|J_{\sigma_\G}(w)|dA(w)\\
    %     &= \frac{1}{\pi}\int_{\H} P_\H(u(\sigma_\G (w)))^2 \frac{dA(w)}{(1-|w|^2)^2}\\
    %     &=\int_{\H} P_\H(u(\sigma_\G (w)))^2 \dw
    % \end{align}
    Thus
    \begin{align}
        \int_\disk P_\H(f(z))^2\dz&=\int_\H P_\H(f(z))^2\dz+\int_{\H} P_\H(f(\sigma_\G (z)))^2\dz\\
        &=\int_\H [P_\H(f(z))^2+ P_\H(f(\sigma_\G (z)))^2]\dz\\
        &=\int_\H [(f(z))^2+ (f(\sigma_\G( z)))^2]\dz=\int_\disk f(z)^2\dz
    \end{align}
    The intermediate equality follows from the definition of polarization: for each
\(z \in \H\), one of \(P_{\H}(f(z))\) and \(P_{\H}(f(\sigma_{\G}(z)))\) equals
\(f(z)\), while the other equals \(f(\sigma_{\G}(z))\).

        \item Observe that \begin{equation*}
                P_\H(E)^c=\left[\H\setminus (E\cup\sigma_\G(E))\right] \cup [(\disk\setminus\H)\setminus (E\cap \sigma_\G(E))].
                \end{equation*}
            For $z\in \H\setminus (E\cup\sigma_\G(E))$, $f(z)=f(\sigma_\G(z))=0.$ Thus $P_\H(f)(z)=0$. For $z\in (\disk\setminus\H)\setminus (E\cap \sigma_\G(E))$, we have either $f(z)=0$ or $f(\sigma_\G(z))=0$. Therefore, $P_\H(f)(z)=\min \{f(z),f(\sigma_\G(z))\}=0$, since $f$ is non-negative. Thus, $P_\H(f)=0$ on $P_\H(E)^c$.

        \item Follows from (i) and (ii).
    \end{enumerate}
\end{proof}

 \section{Proof of the main theorems}\label{sec:main_theorems}
In this section, we prove that the operator $\Lom_h$ is compact and self-adjoint (Proposition~\ref{prop_L_h_compact_sa}). We establish the positivity of the eigenfunction associated with the largest eigenvalue $\tau_h$ of $\Lom_h$ (Proposition~\ref{prop-non-neg}). We also derive Riesz-type inequality under polarization (Proposition~\ref{prop_riezpol}). We then prove the main results stated in the Introduction, namely: the reverse Faber–Krahn inequality (Theorem~\ref{thm_hyperbolicfk}); the representation theorem for eigenfunctions (Theorem~\ref{thm_efreptheorem1}); the fact that zero is not an eigenvalue (Theorem~\ref{cor_zero}); and the positivity of the operator $\mathcal{L}_h$ (Theorem~\ref{thm_hyper_transfinite}). Throughout this section, for any $z,w \in \mathbb{D}$, we denote  $\phi_z$, for the M\"{o}bius transformation that takes unit disk $\disk$ onto itself defined by
\begin{equation*}
\phi_z(w)=\frac{z-w}{1-\overline{z}w}.
\end{equation*}
Note that $|\phi_z(w)| = [z,w], \, \, \phi_z(\phi_z(w))=w,\,\forall\, w\in \disk$, and $ \phi_z(z)=0$.

\begin{proposition}\label{prop_L_h_compact_sa}
    Let $\Om\subset \disk$ be a bounded open subset of $\disk$ with respect to the hyperbolic metric on $\disk$. The operator $\Lom_h : L^2(\Omega) \to C(\overline{\Omega}) \subset L^2(\Omega)$ is compact and self-adjoint on $L^2(\Omega)$.
\end{proposition}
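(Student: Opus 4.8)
The plan is to verify self-adjointness directly from the symmetry of the kernel, and to obtain compactness by showing that $\Lom_h$ carries bounded subsets of $L^2(\Om)$ into uniformly bounded, equicontinuous families in $C(\ol\Om)$ and then invoking the Arzel\`a--Ascoli theorem. Writing the kernel as $K(z,w)=\tfrac12\log\frac{1}{[z,w]}$, self-adjointness is immediate: since the pseudo-hyperbolic distance is symmetric, $[z,w]=[w,z]$, so $K(z,w)=K(w,z)$, and Fubini's theorem (justified by the integrability established below) gives $\lb \Lom_h u, v\rb = \lb u, \Lom_h v\rb$ for all $u,v\in L^2(\Om)$; together with the boundedness recorded below this makes $\Lom_h$ self-adjoint.

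The analytic heart is a uniform control of the weakly singular kernel. Because $\Om$ is bounded in the hyperbolic metric, its Euclidean closure lies in a disk $\{|z|\le\rho\}$ with $\rho<1$; hence for $z,w\in\ol\Om$ we have $1-\rho^2\le|1-\ol z w|\le 1+\rho^2$, the weight $(1-|w|^2)^{-2}$ appearing in $\dw$ is bounded above and below, and
\begin{equation*}
\frac{|z-w|}{1+\rho^2}\le [z,w]\le \frac{|z-w|}{1-\rho^2}.
\end{equation*}
Thus $\log\frac{1}{[z,w]}$ is comparable to the Euclidean logarithmic singularity $\log\frac{1}{|z-w|}$ up to bounded terms, and since $\big(\log\frac{1}{|z-w|}\big)^2$ is integrable over bounded planar sets (in polar coordinates $\int_0^R(\log\tfrac1r)^2 r\,dr<\infty$), we obtain the uniform bound $\sup_{z\in\ol\Om}\int_\Om\big(\log\frac{1}{[z,w]}\big)^2\dw=:M<\infty$. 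By Cauchy--Schwarz this yields $|\Lom_h u(z)|\le \tfrac12\sqrt M\,\|u\|_{L^2(\Om)}$ for every $z\in\ol\Om$, so $\Lom_h$ is bounded and the image of the unit ball is uniformly bounded.

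For continuity and equicontinuity I would estimate, again by Cauchy--Schwarz,
\begin{equation*}
|\Lom_h u(z_1)-\Lom_h u(z_2)|\le \tfrac12\|u\|_{L^2(\Om)}\Big(\int_\Om\Big|\log\frac{[z_2,w]}{[z_1,w]}\Big|^2\dw\Big)^{1/2},
\end{equation*}
and show that the last integral tends to $0$ as $z_1\to z_2$, uniformly over the unit ball. The device is to split $\Om$ into a small pseudo-hyperbolic disk about $z_2$, where the integrand is dominated by the uniformly integrable squared logarithm from the key estimate, and its complement, where $z\mapsto[z,w]$ is jointly continuous and bounded away from the diagonal so that the difference vanishes uniformly by dominated convergence. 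This gives both continuity of $\Lom_h u$ on $\ol\Om$ and equicontinuity of the image of the unit ball, whence Arzel\`a--Ascoli shows that image is relatively compact in $C(\ol\Om)$; since $\Om$ has finite measure, the embedding $C(\ol\Om)\embd L^2(\Om)$ is continuous, so $\Lom_h$ is compact on $L^2(\Om)$. The main obstacle is precisely this equicontinuity estimate: one must control $\int_\Om|\log\frac{[z_1,w]}{[z_2,w]}|^2\dw$ uniformly near the diagonal, where both logarithmic factors blow up simultaneously, and it is the splitting argument combined with the uniform integrability supplied by the key estimate that makes the bound hold independently of $u$ in the unit ball.
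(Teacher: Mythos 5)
Your proposal is correct, and it reaches the same structural endpoints as the paper---a uniform sup-bound via Cauchy--Schwarz, equicontinuity of the image of the unit ball, Arzel\`a--Ascoli in $C(\ol{\Om})$, the embedding $C(\ol{\Om})\embd L^2(\Om)$ via finite measure, and self-adjointness from the symmetric kernel---but the two key estimates are obtained by genuinely different means. For the uniform bound, the paper never compares $[z,w]$ with $|z-w|$: it changes variables by the M\"obius map $\phi_z$, under which the measure $\dw$ is invariant, reducing the inner integral to $\frac{1}{\pi}\int_\disk \bigl(\log\frac{1}{|w|}\bigr)^2\frac{dA(w)}{(1-|w|^2)^2}$, which is evaluated exactly by a series expansion (the radial integral equals $\pi^2/24$); the resulting bound $|\Lom_h f(z)|^2\le \pi^2\|f\|_{L^2(\Om)}^2/48$ holds for \emph{every} $z\in\disk$ and is independent of $\Om$, whereas your constant $M$ depends on $\rho$ and degenerates as $\rho\to 1$. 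For the modulus of continuity, the paper proves a quantitative H\"older estimate: it splits $\log\frac{[z_2,w]}{[z_1,w]}$ into a Euclidean-distance ratio and a ratio of the factors $|1-\ol{z_i}w|$, applies $\log(1+t)\le t^\alpha/\alpha$ on the region where each ratio exceeds $1$, and integrates the resulting singular power $|z_i-w|^{-2\alpha}$ to get $|\Lom_h f(z_1)-\Lom_h f(z_2)|\le C\|f\|_{L^2(\Om)}|z_1-z_2|^{\alpha}$ for any $\alpha\in(0,1)$. This explicit $C^\alpha$ bound yields equicontinuity instantly and is reused later in the paper (Proposition 3.2, continuity of eigenfunctions associated with nonzero eigenvalues). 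Your near-diagonal/off-diagonal splitting with uniform integrability and dominated convergence is sound---and since your small-disk estimate is uniform in the base point, equicontinuity on the compact set $\ol{\Om}$ does follow, which is all Arzel\`a--Ascoli needs---but it produces only a qualitative modulus of continuity, no H\"older exponent. In short: your route is more elementary (no Jacobian formula for M\"obius transformations, no series evaluation), while the paper's buys sharper, $\Om$-independent constants and the $C^\alpha$ regularity that downstream results exploit; the self-adjointness arguments are identical.
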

\begin{proof}
    For $f\in L^2(\Om)$, it can be shown that $\Lom_h f\in L^2(\Om)$:
\begin{align}\label{uniform_bound}
    |\Lom_h f(z)|^2&=\frac{1}{4}\left| \int_\Om \log \frac{1}{[z,w]}f(w)\dw\right|^2\nonumber\\
    &\leq \frac{\|f\|_{L^2(\Om)}^2}{4} \int_\Om \left| \log \frac{1}{[z,w]}\right|^2\dw\nonumber\\
    &\leq \frac{\|f\|_{L^2(\Om)}^2}{4} \int_\disk \left| \log \frac{1}{|\phi_z(w)|}\right|^2\dw\nonumber\\
    &=  \frac{\|f\|_{L^2(\Om)}^2}{4\pi} \int_\disk \left| \log \frac{1}{|\widetilde{w}|}\right|^2  |J_{\phi_z}(\widetilde{w})| \frac{dA(\widetilde{w})}{(1-|\phi_z(\widetilde{w})|^2)^2}          \nonumber\\
    &= \frac{\|f\|_{L^2(\Om)}^2}{4\pi} \int_\disk \left| \log \frac{1}{|w|}\right|^2\frac{dA(w)}{(1-|w|^2)^2}\qquad\text{ (from }\eqref{eqn_det_jac_mob})
\end{align}
From the Taylor series expansion, we have 
\begin{equation*}
    \frac{1}{(1-r^2)^2} =\sum_{n=0}^\infty (n+1)r^{2n},\quad |r|<1
\end{equation*}
\begin{equation}
    \int_0^1 \frac{(\log\, r)^2 r}{(1-r^2)^2}dr=\sum_{n=0}^\infty (n+1) \int_0^1 r^{2n+1}(log\, r)^2 dr=\sum_{n=0}^\infty (n+1)\frac{1}{4(n+1)^3}=\frac{\pi^2}{24}.
\end{equation}
Substituting back in \eqref{uniform_bound} we get
\begin{equation}\label{finalbound}
    |\Lom_h f(z)|^2 \leq \frac{\|f\|_{L^2(\Om)}^2}{4\pi} \cdot \frac{2\pi\cdot\pi^2}{24}=\frac{ \pi^2 \|f\|_{L^2(\Om)}^2}{48}.
\end{equation}
Since $\Omega$ is bounded with respect to the hyperbolic metric, there exists a constant $M>0$ such that, for every $z\in\Omega$,
\[
d_h(0,z)=\log\!\left(\frac{1+|z|}{1-|z|}\right)\le M.
\]
Consequently,
\[
\frac{1}{1-|z|}\le \frac{1+|z|}{1-|z|}\le e^{M},
\]
and hence
\[
|z|\le 1-\frac{1}{e^{M}}=:k<1.
\]
Together with \eqref{finalbound}, we have
\begin{equation}
\int_{\Omega} |\mathcal{L}_h f(z)|^2\dz
\le \frac{\pi}{48}\,\|f\|_{L^2(\Omega)}^{2}
\int_{\Omega} \frac{dA(z)}{(1-|z|^2)^2}
\le \frac{\pi}{48}\,
\frac{|\Om|}{(1-k^2)^2}\,\|f\|_{L^2(\Omega)}^{2},
\end{equation}
where \(|\Om|\) denotes the Lebesgue area measure of \(\Omega\), which is finite. Therefore, $\Lom_h f\in L^2(\Om)$, and the operator $\Lom_h$ is a bounded operator on $L^2(\Om)$. 
%\tr{(no need of $\Om\subset K$)}
For any $z_1, z_2\in\Om$, we have
\begin{align}\label{uniform_continuity_H}
    |\Lom_h f(z_1)-\Lom_h f(z_2)|^2&\leq \frac{\|f\|_{L^2(\Om)}^2}{4} \left[\int_\Om \left| \log \frac{[z_2,w]}{[z_1,w]}\right|^2 \dw\right]\nonumber\\
    &\leq \frac{\|f\|_{L^2(\Om)}^2}{2} \left[\int_\Om \left| \log \frac{|z_2-w|}{|z_1-w|}\right|^2 \dw + \int_\Om \left| \log \frac{|1-\ol{z_1}w|}{|1-\ol{z_2}w|}\right|^2 \dw\right]\nonumber\\
\end{align}
To estimate the integrals, we use the following inequality:
\begin{equation}\label{log_inequality}
    \log(1 + t) \leq \frac{t^\alpha}{\alpha}, \quad \forall\, t > 0, \text{ and } \alpha \in (0,1).
\end{equation}
% and
% \begin{equation*}
%     (a + b)^\alpha \leq c_\alpha(a^\alpha + b^\alpha), \quad \forall\, a, b > 0,\text{ and }\alpha>0.
% \end{equation*}
%Since \tr{$\Om\subset K$}, $|w|\leq k $ for some $0<k<1$
Define
\begin{align}
F(z_1,z_2,w)
&:= \frac{|z_2-w|}{|z_1-w|}.
\end{align}
Then,
\begin{align}
\left| \log F(z_1,z_2,w) \right|
&= \left| \log F(z_2,z_1,w) \right|.
\end{align}
Let
\begin{align}
\Omega_1 
&:= \left\{ w \in \Om : F(z_1,z_2,w) \ge 1 \right\},
\;\text{ and }\;
\Omega_2 := \left\{ w \in \Om : F(z_1,z_2,w) < 1 \right\}.
\end{align}
The first integral in \eqref{uniform_continuity_H} can be estimated as follows:
\begin{align}\label{prop1_e1}
\int_\Om 
\left| \log \frac{|z_2-w|}{|z_1-w|} \right|^2 \dw
&=
\int_{\Omega_1}
\left| \log \frac{|z_2-w|}{|z_1-w|} \right|^2 \dw +
\int_{\Omega_2}
\left| \log \frac{|z_1-w|}{|z_2-w|} \right|^2 \dw .
\end{align}
For \(w \in \Omega_1\), let
\begin{align}
t_w
:= \frac{|z_2-w|}{|z_1-w|}-1
= \frac{|z_2-w|-|z_1-w|}{|z_1-w|}
\ge 0.
\end{align}
Using \eqref{log_inequality}, we obtain
\begin{align}
0
\le \log \frac{|z_2-w|}{|z_1-w|}
&\le \frac{1}{\alpha}
\left( \frac{|z_2-w|-|z_1-w|}{|z_1-w|} \right)^{\alpha}
\\
&\le \frac{1}{\alpha}
\frac{|z_1-z_2|^{\alpha}}{|z_1-w|^{\alpha}}.
\end{align}
Consequently,
\begin{align}
\left| \log \frac{|z_2-w|}{|z_1-w|} \right|^2
\le \frac{1}{\alpha^2}
\frac{|z_1-z_2|^{2\alpha}}{|z_1-w|^{2\alpha}}.
\end{align}
Integrating over \(\Omega_1\) yields
\begin{align}\label{prop1_e2}
\int_{\Omega_1}
\left| \log \frac{|z_2-w|}{|z_1-w|} \right|^2 \, \dw
\le
\frac{|z_1-z_2|^{2\alpha}}{\alpha^2}
\int_{\Omega_1} \frac{\dw}{|z_1-w|^{2\alpha}}.
\end{align}
Similarly,
\begin{align}\label{prop1_e3}
\int_{\Omega_2}
\left| \log \frac{|z_1-w|}{|z_2-w|} \right|^2 \dw
&\le
\frac{|z_1-z_2|^{2\alpha}}{\alpha^2}
\int_{\Omega_2} \frac{\dw}{|z_2-w|^{2\alpha}}.
\end{align}
Furthermore,
\begin{align}\label{prop1_e4}
\int_{\Om_i} \frac{\dw}{|z_i-w|^{2\alpha}}
&=\frac{1}{\pi}\int_{\Om_i} \frac{1}{|z_i-w|^{2\alpha}}\frac{dA(w)}{(1-|w|^2)^2}\leq \frac{1}{\pi (1-k^2)^2}\int_{\Om_i}\frac{dA(w)}{|z_i-w|^{2\alpha}} \nonumber \\
&
\le
\frac{1}{\pi (1-k^2)^2}
\int_{B(z_i,2)} \frac{dA(w)}{|z_i-w|^{2\alpha}}=\frac{1}{\pi (1-k^2)^2}2\pi \int_0^2 r^{1-2\alpha} \, dr\nonumber\\
&=\frac{2^{2-2\alpha}}{(1-k^2)^2 (1-\alpha)}=:C_\alpha,
\end{align}
where $B(z_i,2)$ is the Euclidean disk centered at $z_i$ with radius 2. Substituting \eqref{prop1_e2},\eqref{prop1_e3}, and \eqref{prop1_e4} in \eqref{prop1_e1}, we obtain
\begin{align}\label{prop1_e5}
\int_{\Om}
\left| \log \frac{|z_2-w|}{|z_1-w|} \right|^2 \dw
&\le
\frac{2C_\alpha}{\alpha^2}|z_1-z_2|^{2\alpha}.
\end{align}
Proceeding as in the previous case, we obtain
\begin{equation}
    \left| \log \frac{|1-\ol{z_1}w|}{|1-\ol{z_2}w|}\right|^2\leq \frac{1}{\alpha^2}\left(\frac{k}{1-k^2}\right)^{2\alpha} |z_1-z_2|^{2\alpha}.
\end{equation}
Therefore,
\begin{align}\label{prop1_e6}
    \int_\Om \left| \log \frac{|1-\ol{z_1}w|}{|1-\ol{z_2}w|}\right|^2 \dw &\leq  \frac{1}{\pi\alpha^2}\left(\frac{k}{1-k^2}\right)^{2\alpha} |z_1-z_2|^{2\alpha}\int_\Om\frac{dA(w)}{(1-|w|^2)^2}\nonumber\\
    &\leq \frac{|\Om|}{\pi\alpha^2}\frac{k^{2\alpha}}{(1-k^2)^{2+2\alpha}} |z_1-z_2|^{2\alpha}= C_\alpha' |z_1-z_2|^{2\alpha}.
\end{align}
Substituting \eqref{prop1_e5} and \eqref{prop1_e6} into \eqref{uniform_continuity_H}, we obtain
\begin{equation}\label{equicontinuous}
    |\Lom_h f(z_1)-\Lom_h f(z_2)|\leq C \|f\|_{L^2(\Om)} |z_1-z_2|^{\alpha}.
\end{equation}
Therefore, $\mathcal{L}_h f \in C^{\alpha}(\Omega)$, for $\alpha\in (0,1)$. In particular,
$\mathcal{L}_h f$ is H\"older continuous and hence uniformly continuous on
$\Omega$. Consequently, it admits a continuous extension to
$\overline{\Omega}$, and thus $\Lom_hf\in C(\overline{\Om})$.

Let \((f_n)_{n\in\mathbb{N}} \subset L^2(\Omega)\) be a bounded sequence with respect to $L^2$ norm. 
By \eqref{finalbound}, the sequence \((\mathcal{L}_h f_n)_{n\in\mathbb{N}}\) is uniformly bounded, and by the estimate in \eqref{equicontinuous}, it is equicontinuous. 
Hence, by the Arzela–Ascoli theorem, there exists a subsequence of \((\mathcal{L}_h f_n)_{n\in\mathbb{N}}\), which we relabel by the same index, that converges uniformly to some $g$ in \(C(\overline{\Omega})\).
Therefore, given \(\varepsilon>0\), there exists \(N\in\mathbb{N}\) such that
\begin{equation}
    \int_\Om (\Lom_h f_n(z)-g(z))^2 \dz \leq \frac{\epsilon^2}{\pi} \int_\Om \frac{dA(z)}{(1-|z|^2)^2}\leq   \frac{|\Om|}{\pi(1-k^2)^2}\epsilon^2, \quad\forall\, n\geq N,
\end{equation}
which implies the convergence of $(\Lom_h f_n)$ in $L^2(\Om)$. Therefore, the operator $\Lom_h$ is a compact operator on $L^2(\Om)$. Furthermore, $\Lom_h$ is a self-adjoint operator on $L^2(\Om)$, since:
\begin{equation*}
    \lb \Lom_h f,g\rb = \iint\limits_{\Om\;\Om} \log \frac{1}{[z,w]} f(z)g(w)\dz\dw= \lb \Lom_h g,f\rb.
\end{equation*}
\end{proof}
As a consequence of the above arguments, in particular \eqref{equicontinuous}, eigenfunctions corresponding to nonzero eigenvalues are continuous. We formalize this conclusion in the proposition below.
\begin{proposition}\label{prop_ef_regularity}
    Let $\Omega \subset \mathbb{D}$ be a bounded open set with respect to the hyperbolic metric on $\mathbb{D}$, and let $u$ be an eigenfunction corresponding to the eigenvalue $\tau$ of~\eqref{evproblem}. Then $\tau u \in C(\overline{\Omega})$.
\end{proposition}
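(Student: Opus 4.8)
The plan is to reduce the statement to the mapping property of $\Lom_h$ already established in Proposition~\ref{prop_L_h_compact_sa}. First I would pass from the weak formulation that defines an eigenpair to a pointwise (almost everywhere) identity in $L^2(\Om)$. Since $(\tau,u)$ is an eigenpair of~\eqref{evproblem}, the weak formulation \eqref{eulerlagrange} gives $\lb \Lom_h u - \tau u, v\rb = 0$ for every $v\in L^2(\Om)$; choosing the admissible test function $v = \Lom_h u - \tau u \in L^2(\Om)$ yields $\norm{\Lom_h u - \tau u}_{L^2(\Om)}^2 = 0$, and hence $\Lom_h u = \tau u$ as elements of $L^2(\Om)$.

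Next I would invoke the regularity of the range of $\Lom_h$. Since $u$ is an eigenfunction it lies in $L^2(\Om)$, so Proposition~\ref{prop_L_h_compact_sa} applies with $f = u$. In particular, the H\"older estimate \eqref{equicontinuous} shows that $\Lom_h u \in C^{\alpha}(\Om)$ for every $\alpha\in(0,1)$, so $\Lom_h u$ is uniformly continuous on $\Om$ and therefore admits a continuous extension to $\ol{\Om}$, i.e. $\Lom_h u \in C(\ol{\Om})$. Combining this with the identity from the first step gives $\tau u = \Lom_h u \in C(\ol{\Om})$, which is the desired conclusion.

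Since the statement is a direct corollary of the continuity already proved for the image of $\Lom_h$, I do not anticipate any genuine obstacle in the argument. The only subtlety worth emphasizing is that the conclusion is stated for $\tau u$ rather than for $u$: when $\tau=0$ the identity degenerates to $\tau u = 0$, which is trivially continuous but conveys no information about $u$ itself, whereas for $\tau\neq 0$ one may divide by $\tau$ to deduce that $u\in C(\ol{\Om})$. This is precisely the distinction needed to repair the gap in Troutman's argument noted in the Introduction, and it is the reason the proposition is phrased in terms of $\tau u$.
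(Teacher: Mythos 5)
Your proposal is correct and follows essentially the same route as the paper: both deduce $\Lom_h u = \tau u$ in $L^2(\Om)$ and then apply the H\"older estimate \eqref{equicontinuous} from Proposition~\ref{prop_L_h_compact_sa} to conclude $\tau u = \Lom_h u \in C(\ol{\Om})$. Your explicit verification that the weak formulation forces $\Lom_h u = \tau u$ a.e.\ (via the test function $v = \Lom_h u - \tau u$), and your remark about the $\tau = 0$ case, are details the paper leaves implicit, but they do not change the argument.
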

\begin{proof}
    For any $f\in L^2(\Om)$, we have $\Lom_h f\in C(\ol{\Om})$, as follows from \eqref{equicontinuous}. In particular, $\Lom_h u =\tau u\in C(\overline{\Om}) $.
    %(By Theorem \ref{cor_zero}, we have $\tau\neq 0$, and consequently $u\in C(\overline{\Om})$).
\end{proof}
Next, we show that the largest eigenvalue \(\tau_h\) is simple and  the corresponding eigenfunction can be chosen positive.
\begin{proposition}\label{prop-non-neg}
    Let $\Om\subset \disk$ be a bounded open subset of $\disk$ with respect to the hyperbolic metric on $\disk$. Then, an eigenfunction corresponding to $\tau_h$ does not vanish in $\Om$. Consequently, the eigenvalue $\tau_h$ is simple.
\end{proposition}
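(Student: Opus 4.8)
The plan is to exploit the strict positivity of the integral kernel $K(z,w)=\tfrac12\log\frac{1}{[z,w]}$ on $\Om\times\Om$ away from the diagonal: since $\phi_z$ maps $\disk$ into $\disk$, we have $[z,w]=|\phi_z(w)|<1$ for all $z,w\in\disk$, so $\log\frac{1}{[z,w]}>0$ whenever $z\neq w$. Thus $\mathcal L_h$ is a positivity-improving integral operator, and the argument will follow the Perron--Frobenius / Krein--Rutman pattern. First I would record that $\tau_h>0$: testing the quotient in \eqref{weightedcharmu1} with any nonnegative $u\not\equiv0$ gives $\tfrac12\iint_{\Om\,\Om}K\,u\,u>0$, because the integrand is strictly positive on a set of positive measure.

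The first main step is to show that if $u$ is a unit-norm eigenfunction for $\tau_h$, then $|u|$ is one as well. Since $K\ge0$,
\[
\tau_h=\frac12\iint\limits_{\Om\;\Om}\log\tfrac{1}{[z,w]}\,u(z)u(w)\dz\dw
\le\frac12\iint\limits_{\Om\;\Om}\log\tfrac{1}{[z,w]}\,|u(z)||u(w)|\dz\dw\le\tau_h,
\]
where the last inequality is the variational characterization \eqref{weightedcharmu1} applied to $|u|$, using that $|u|$ again has unit $L^2(\Om)$ norm. Hence equality holds throughout, so $|u|$ attains the supremum and is therefore an eigenfunction for $\tau_h$. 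Moreover, the equality forces $K(z,w)\bigl[|u(z)||u(w)|-u(z)u(w)\bigr]=0$ a.e.; since $K>0$ off the diagonal, this yields $u(z)u(w)\ge0$ for a.e.\ $(z,w)$, so $u$ does not change sign and may be taken nonnegative.

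Next I would prove nonvanishing. By Proposition~\ref{prop_ef_regularity}, since $\tau_h\neq0$ the eigenfunction $u$, and hence $|u|$, is continuous on $\overline{\Om}$. The nonnegative eigenfunction satisfies $\tau_h|u|(z)=\frac12\int_\Om\log\frac{1}{[z,w]}|u(w)|\dw$ for every $z\in\Om$. Suppose $|u|(z_0)=0$ for some $z_0\in\Om$. As $\tau_h>0$, the right-hand integrand is nonnegative and integrates to zero; since $\log\frac{1}{[z_0,w]}>0$ for $w\neq z_0$, this forces $|u(w)|=0$ for a.e.\ $w\in\Om$, and continuity then gives $u\equiv0$, a contradiction. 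Hence $|u|>0$ on $\Om$, so every eigenfunction for $\tau_h$ is nonvanishing.

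Finally, simplicity follows by a subtraction argument: given eigenfunctions $u_1,u_2$ for $\tau_h$, fix $z_0\in\Om$ and set $v=u_2(z_0)u_1-u_1(z_0)u_2$, which is again an eigenfunction for $\tau_h$ and vanishes at $z_0$. By the nonvanishing just proved, $v\equiv0$, so $u_1$ and $u_2$ are linearly dependent and the $\tau_h$-eigenspace is one-dimensional. The main obstacle is the first step---justifying through the equality case that $|u|$ is genuinely a maximizer, hence an eigenfunction---together with the accompanying measure-theoretic point that a nonnegative integrand integrating to zero against the strictly positive kernel must vanish a.e.; it is the continuity furnished by Proposition~\ref{prop_ef_regularity} that upgrades this to everywhere nonvanishing.
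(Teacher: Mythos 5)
Your proof is correct and follows essentially the same route as the paper: one-signedness of the maximizer via the comparison $E(u)\le E(|u|)$ using the strict positivity of the kernel $\log\frac{1}{[z,w]}$ (the paper phrases this as a strict-inequality contradiction for a sign-changing $u_1$, you as the equality case -- these are the same argument), followed by nonvanishing from the integral identity at a putative interior zero, with continuity supplied by Proposition~\ref{prop_ef_regularity}. Your explicit subtraction argument $v=u_2(z_0)u_1-u_1(z_0)u_2$ for simplicity is a welcome addition, since the paper leaves that final step implicit.
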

\begin{proof}
     Let $u_1$ be an eigenfunction corresponding to $\tau_h$ such that $\| u_1\|_{L^2(\Om)}=1$. From Proposition \ref{prop_ef_regularity} it follows that $u_1$ is continuous. Moreover,
    \begin{equation}\label{contradiction}
        E( u_1) = \sup \left\{ E(u):u\in L^2(\Omega),\|u\|_{L^2(\Omega)}=1\right\},
    \end{equation}
    where 
    \begin{equation}
        E(u):= \frac{1}{2}\iint\limits_{\Om\;\Om} \log \frac{1}{[z,w]} u(z)u(w)\dz\dw,\quad \text{for }u\in L^2(\Omega).
    \end{equation}
    Let
    \begin{equation*}
        \Omega^+=\{z\in\Omega: u_1(z)>0\},\,\,\text{  and  }\;
        \Omega^-=\{z\in\Omega: u_1(z)<0\}.
    \end{equation*}
   %\tr{It is possible to suppose anything, so write If possible, let} Suppose 
    Suppose $u_1$ is sign changing, then 
    \begin{equation}\label{measnonzero}
        |\Omega^+|\neq 0 \text{ and }|\Omega^-|\neq 0.
    \end{equation}
   Moreover,
    \begin{equation}\label{signchange}
        |u_1(z)u_1(w)|>u_1(z)u_1(w),\quad \text{for all } z\in\Omega^+\, ,\, w\in\Omega^-.
    \end{equation}
    We have $[z,w]=|\phi_z(w)|<1,\text{ for any } z,w\in\Om$ and hence
    \begin{equation}\label{log is positive}
        \log \frac{1}{[z,w]} > 0,\quad \forall\, z,w\in\Om.
    \end{equation}
    Next, consider
    \begin{align*}
        E (|u_1|) &=\frac{1}{2}\iint\limits_{\Om^+\;\Om^+}\log\frac{1}{[z,w]} u_1(z)u_1(w)\dz\dw + \iint\limits_{\Om^+\;\Om^-}\log\frac{1}{[z,w]} |u_1(z) u_1(w)|\dz\dw\\
        &\quad\quad\quad\quad +\frac{1}{2} \iint\limits_{\Om^-\;\Om^-}\log\frac{1}{[z,w]} u_1(z)u_1(w)\dz\dw\\
        &> \frac{1}{2}\iint\limits_{\Om^+\;\Om^+}\log\frac{1}{[z,w]} u_1(z)u_1(w)\dz\dw +  \iint\limits_{\Om^+\;\Om^-}\log\frac{1}{[z,w]} u_1(z)u_1(w)\dz\dw\\
        &\quad\quad\quad\quad +\frac{1}{2}\iint\limits_{\Om^-\;\Om^-}\log\frac{1}{[z,w]}u_1(z)u_1(w)\dz\dw= E(u_1),
    \end{align*}
where the strict inequality follows from $\eqref{measnonzero},\eqref{signchange} $ and \eqref{log is positive}.
% ( Indeed, for any $x, y \in \Omega$, we have $|x - y| < 1$. In fact, if $|x - y| = 1$, then there exists a neighborhood $N_x$ of $x$ and a point $z \in N_x$ such that $|z - y| > 1$, which contradicts $diam(\Omega) \leq 1$). 
A contradiction to \eqref{contradiction}. Thus, $u_1$ can not change its sign in $\Omega$, and hence $u_1$ can be chosen to be non-negative.   
% \tr{ which is a contradiction to $\eqref{contradiction}$. The strict inequality follows from $\eqref{measnonzero},\eqref{signchange} $ and by using the hypothesis $diam(\Omega)<1 $ which implies $\log \frac{1}{[z,w]}>0$.  Hence, $u$ is not a sign changing and can be chosen as non-negative. }
     Now suppose that $u_1(z_0)=0$ for some $z_0\in\Om$. Then,  
    \begin{equation}
        \tau_h u_1(z_0)=\frac{1}{2}\int_\Omega \log\frac{1}{[z_0,w]}u_1(w)\dw=0.
    \end{equation}
    Since the integrand does not change sign, from \eqref{log is positive}, we must have $u_1 \equiv 0$ in $\Omega$, a contradiction as $u_1$ is an eigenfunction. Thus, $u_1$ does not vanish in $\Om$.     
\end{proof}

In the \(\mathbb{R}^2\) case, the proof of the reverse Faber--Krahn inequality in \cite[Theorem~1.6]{anoopjiya2025} relies on a Riesz-type inequality; see \cite[Proposition~2.6]{anoopjiya2025}. Similarly, to prove Theorem~\ref{thm_hyperbolicfk}, we first establish a Riesz-type inequality. Consider the space
\begin{equation*}
    \mathcal{X}^h=\{f:f \text{ is measurable on }\disk \text{ and } \iint\limits_{\disk\;\disk} \left|\log \frac{1}{[z,w]}\right|\big| f(z)f(w) \big| \dz\dw<+\infty\}.
\end{equation*}
In the following proposition, we prove a Riesz-type inequality for functions in $\mathcal{X}^h$.
\begin{proposition}\label{prop_riezpol}
    Let $\mathcal{G}$ be a geodesic in $\mathbb{D}$ and let $\mathcal{H}$ be an associated polarizer. Let $f$ be a measurable function on $\disk$, such that $f,P_\H(f)\in\mathcal{X}^h$. Then,
    \begin{equation}\label{riez_type_inequality_pol}
        \iint\limits_{\disk\;\disk} \log \frac{1}{[z,w]} f(z)f(w) \dz\dw \leq  \iint\limits_{\disk\;\disk} \log \frac{1}{[z,w]} P_\H(f)(z)P_\H(f)(w) \dz\dw .
    \end{equation}
    In addition, the equality holds in \eqref{riez_type_inequality_pol} only if either $ P_\H(f)=f$ a.e. or $P_\H(f)=f\circ \sigma_\G $ a.e. in $\disk$.
\end{proposition}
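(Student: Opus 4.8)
The plan is to reduce the integral over $\disk\times\disk$ to one over $\H\times\H$ and then run a pointwise two–point rearrangement argument on the four–element orbit $\{z,\sigma_\G(z)\}\times\{w,\sigma_\G(w)\}$. First I would use the decomposition $\disk=\H\sqcup\widetilde{\H}\sqcup\G$ from the preceding Remark (with $\widetilde{\H}=\sigma_\G(\H)$ and $\G$ of measure zero) to split each of the two integrals into the four product regions $\H\times\H$, $\H\times\widetilde{\H}$, $\widetilde{\H}\times\H$ and $\widetilde{\H}\times\widetilde{\H}$. Applying the change–of–variables identity of Proposition~\ref{prop_int_reflection} in the $\sigma_\G$–image factor (variable by variable, Fubini being justified by the hypothesis $f,P_\H(f)\in\mathcal{X}^h$) pulls every piece back to $\H\times\H$, so that both sides of \eqref{riez_type_inequality_pol} become a single integral over $\H\times\H$ of an integrand built from the four function values on the orbit.

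Next I would exploit the kernel invariances. For $z,w\in\H$, parts (iii) and (iv) of Proposition~\ref{prop_reflection} give $[z,w]=[\sigma_\G(z),\sigma_\G(w)]$ and $[z,\sigma_\G(w)]=[\sigma_\G(z),w]$, so the orbit carries only two kernel values, $s:=\log\frac{1}{[z,w]}$ (same side) and $d:=\log\frac{1}{[z,\sigma_\G(w)]}$ (opposite sides). Writing $a=f(z)$, $a'=f(\sigma_\G(z))$, $b=f(w)$, $b'=f(\sigma_\G(w))$, the orbit contributes $s(ab+a'b')+d(ab'+a'b)$ to the left–hand integrand, while for $P_\H(f)$ it contributes $s(AB+A'B')+d(AB'+A'B)$ with $A=\max(a,a')$, $A'=\min(a,a')$, $B=\max(b,b')$, $B'=\min(b,b')$. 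Since $AB+A'B'+AB'+A'B=(a+a')(b+b')=ab+a'b'+ab'+a'b$, the total of the four products is polarization–invariant, so the two kernel–weighted differences are exact negatives and the whole difference of integrands collapses to $(s-d)\,\delta$, where $\delta:=(AB+A'B')-(ab+a'b')$.

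The key computation is then the elementary identity $\delta=\tfrac12\bigl(|uv|-uv\bigr)\ge 0$, with $u=a-a'=f(z)-f(\sigma_\G(z))$ and $v=b-b'=f(w)-f(\sigma_\G(w))$; this is just the sort–and–match inequality for two pairs. Combined with $s-d>0$, which is precisely part (v) of Proposition~\ref{prop_reflection} (same–side points satisfy $[z,w]<[z,\sigma_\G(w)]$), I get $(s-d)\delta\ge 0$ pointwise on $\H\times\H$, and integrating yields \eqref{riez_type_inequality_pol}.

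For the equality case I would use that $s>d$ holds for a.e.\ $(z,w)\in\H\times\H$, so equality forces $\delta=0$, i.e.\ $u(z)\,u(w)\ge 0$, for a.e.\ $(z,w)$. A function whose product with itself is a.e.\ nonnegative on $\H\times\H$ cannot take values of both strict signs on sets of positive measure, hence $u\ge 0$ a.e.\ on $\H$ or $u\le 0$ a.e.\ on $\H$. Unwinding the definition of $P_\H$ on $\H$ and on $\widetilde{\H}$, the first case gives $P_\H(f)=f$ a.e.\ and the second gives $P_\H(f)=f\circ\sigma_\G$ a.e., as claimed. The main obstacle I anticipate is bookkeeping rather than conceptual: making the reduction to $\H\times\H$ fully rigorous under the $\mathcal{X}^h$ integrability hypothesis, so that the off–diagonal rearrangement of the (sign–changing) products is legitimate and the logarithmic singularity of the kernel on the diagonal causes no trouble, and confirming that the strict inequality $s>d$ holds off a null set so that the equality analysis goes through.
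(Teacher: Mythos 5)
Your proposal is correct and takes essentially the same route as the paper: reduction of both sides to an integral over $\H\times\H$ via Proposition~\ref{prop_int_reflection}, the kernel invariances $[z,w]=[\sigma_\G(z),\sigma_\G(w)]$, $[z,\sigma_\G(w)]=[\sigma_\G(z),w]$ and the strict same-side inequality of Proposition~\ref{prop_reflection}(v), a two-point rearrangement on the orbit $\{z,\sigma_\G(z)\}\times\{w,\sigma_\G(w)\}$, and the same equality analysis forcing $f\ge f\circ\sigma_\G$ a.e.\ or $f\le f\circ\sigma_\G$ a.e.\ on $\H$. The only difference is organizational: where the paper partitions $\H$ into $A=\{f\ge f\circ\sigma_\G\}$ and $B=\{f<f\circ\sigma_\G\}$ and compares $\mathcal{S}_f$ with $\mathcal{S}_{P_\H(f)}$ regionwise, you capture the same comparison in the single pointwise identity $\mathcal{S}_{P_\H(f)}-\mathcal{S}_f=(s-d)\,\tfrac12\bigl(|uv|-uv\bigr)\ge 0$, a harmless streamlining that avoids the case split.
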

\begin{proof}
    The proof follows the approach of \cite[Proposition 2.6]{anoopjiya2025}, with the necessary modifications for the Poincar\'e disk setting.
    \begin{align}
         I(f)&= \iint\limits_{\disk\;\disk} \log \frac{1}{[z,w]} f(z)f(w) \dz\dw\nonumber\\
         &=\iint\limits_{\H\;\H} \log \frac{1}{[z,w]} f(z)f(w) \dz\dw+ \iint\limits_{\H\;\widetilde{\H}} \log \frac{1}{[z,w]} f(z)f(w) \dz\dw\nonumber\\
         &\qquad\qquad +\iint\limits_{\widetilde{\H}\;\H} \log \frac{1}{[z,w]} f(z)f(w) \dz\dw+\iint\limits_{\widetilde{\H}\;\widetilde{\H}} \log \frac{1}{[z,w]} f(z)f(w) \dz\dw,
    \end{align}
    where $\widetilde{\H}=\sigma_\G(\H)$. By applying the change of variables from Proposition~\ref{prop_int_reflection}, we obtain
    \begin{equation}
        \iint\limits_{\H\;\widetilde{\H}} \log \frac{1}{[z,w]} f(z)f(w) \dz\dw=\iint\limits_{\H\;\H} \log \frac{1}{[\sigma_\G(z),w]} f(\sigma_\G(z))f(w) \dz\dw,
    \end{equation}
    and 
    \begin{equation}
        \iint\limits_{\widetilde{\H}\;\widetilde{\H}} \log \frac{1}{[z,w]} f(z)f(w) \dz\dw=\iint\limits_{\H\;\H} \log \frac{1}{[\sigma_\G(z),\sigma_\G(w)]} f(\sigma_\G(z))f(\sigma_\G(w)) \dz\dw.
    \end{equation}
\noindent Denote $\widetilde{z}=\sigma_\G(z)$ and $K(z,w)=\log \frac{1}{[z,w]}$. Thus
\begin{align}
    I(f)&=\iint\limits_{\H\;\H} \big[ K(z,w)(f(z)f(w)+f(\widetilde{z})f(\widetilde{w}))+K(z,\widetilde{w})(f(z)f(\widetilde{w})+f(\widetilde{z})f(w)) \big] \dz\dw.
\end{align}
 Let 
 \begin{equation*}
     \mathcal{S}_{f}(z,w):=K(z,w)(f(z)f(w)+f(\widetilde{z})f(\widetilde{w}))+K(z,\widetilde{w})(f(z)f(\widetilde{w})+f(\widetilde{z})f(w)).
 \end{equation*}
  Then,
  \begin{equation}\label{I_f}
     I(f)= \iint\limits_{A\;A} \mathcal{S}_{f}(z,w) \dz\dw + 2 \iint\limits_{B\;A} \mathcal{S}_{f}(z,w) \dz\dw +\iint\limits_{B\;B} \mathcal{S}_{f}(z,w) \dz\dw,
 \end{equation}
 where  
 \begin{equation*}
     A=\{z\in \H: f(z)\geq f(\widetilde{z})\}\text{ and }B=\{z\in \H: f(z)< f(\widetilde{z})\} .
 \end{equation*}
 Notice that,
 \begin{equation}\label{x_in A_x_in B def}
\begin{aligned}
  P_\H(f)(z)=f(z) &\quad\text{and}\quad P_\H(f)(\widetilde{z})=f(\widetilde{z}),\quad\forall\, z\in A,\\
     P_\H(f)(z)=f(\widetilde{z}) &\quad\text{and}\quad  P_\H(f)(\widetilde{z})=f(z),\quad\forall\, z\in B.
\end{aligned}
\end{equation}
It is easy to verify that $$\mathcal{S}_{f}(z,w)=\mathcal{S}_{P_\H(f)}(z,w),\quad \forall\, (z,w)\in (A\times A) \bigcup (B\times B).$$ Consequently,
\begin{equation}\label{A_A}
    \iint\limits_{A\;A} \mathcal{S}_{f}(z,w) \dz\dw= \iint\limits_{A\;A} \mathcal{S}_{P_\H(f)}(z,w) \dz\dw,
\end{equation}
and
\begin{equation}\label{B_B}
    \iint\limits_{B\;B} \mathcal{S}_{f}(z,w) \dz\dw= \iint\limits_{B\;B} \mathcal{S}_{P_\H(f)}(z,w) \dz\dw.
\end{equation}

\noindent On the other hand, for $(z,w)\in A\times B$, 
%using \eqref{x_in A_x_in B def}, we obtain:          
    \begin{equation}\label{S_PHf-S_f}
        \mathcal{S}_{P_\H(f)}(z,w)-\mathcal{S}_{f}(z,w)=(f(z)-f(\widetilde{z}))(f(\widetilde{w})-f(w))(K(z,w)-K(z,\widetilde{w})).
    \end{equation}
    \noindent From Proposition \ref{prop_reflection}, it follows that  $[z,w]< [z,\widetilde{w}]$, and hence  $K(z,w)>K(z,\widetilde{w})$. Thus,
    \begin{equation*}
        \mathcal{S}_{P_\H(f)}(z,w)-\mathcal{S}_{f}(z,w)\geq 0.
    \end{equation*}
    Therefore,
    \begin{equation}\label{B_A}
    \iint\limits_{B\;A} \mathcal{S}_{f}(z,w) \dz\dw\leq \iint\limits_{B\;A} \mathcal{S}_{P_\H(f)}(z,w) \dz\dw.
\end{equation}

\noindent Now, by combining \eqref{A_A}, \eqref{B_B}and \eqref{B_A}, we conclude  $I(f)\leq I(P_\H(f))$. Next, we assume that  $I(f)=I(P_\H(f))$. Thus,
\begin{equation*}
    \iint\limits_{B\;A} \mathcal{S}_{f}(z,w)\dz\dw=\iint\limits_{B\;A} \mathcal{S}_{P_\H(f)}(z,w)\dz\dw.
\end{equation*}
Therefore, $$\mathcal{S}_{P_\H(f)}(z,w)=\mathcal{S}_{f}(z,w) \text{ a.e on } A\times B.$$ Now, let $A_1=\{z\in A: f(z)>f(\widetilde{z})\}.$ 
     %\text{    and    } A_2=\{z\in A: f(z)=f(\widetilde{z})\}.
Since $K(z,w)>K(z,\widetilde{w})$,  from \eqref{S_PHf-S_f} we conclude that $\mathcal{S}_{P_\H(f)}(z,w)>\mathcal{S}_{f}(z,w)$, for $(z,w)\in A_1\times B$. Therefore $|A_1\times B|=0$ and hence $|A_1|=0$ or $|B|=0$. Notice that, 
\begin{enumerate}[(i)]
    \item if $|B|=0$, then $f(z)\geq f(\widetilde{z})$  a.e. in $H$, and hence $P_\H(f)=f$ a.e. in $\disk,$
     \item if $|A_1|=0$, then $f(z)\leq f(\widetilde{z})$ for a.e. in $H$, and hence $P_\H(f)=f\circ\sigma_\G$  a.e. in $\disk$.  
\end{enumerate}
 This concludes the proof.
\end{proof}
Using this Riesz-type inequality, we now prove the reverse Faber-Krahn inequality stated in the Introduction.
\\
\noindent\textbf{Proof of Theorem \ref{thm_hyperbolicfk}:}
\par 
Let $u_1$ be the eigenfunction corresponding to $\tau_h(\Om)$, normalized by $\|u_1\|_{L^2(\Omega)}=1$ and satisfying $u_1>0$ in $\Om$ (Proposition \ref{prop-non-neg}). By Proposition~\ref{prop_pol_properties}, we have   $\|P_\H(u_1)\|_{L^2(P_\H(\Omega))}=\|u_1\|_{L^2(\Omega)}=1$ and $P_\H(u_1)=0$ on $P_\H(\Om)^c$. Using Proposition~\ref{prop_riezpol}, we obtain
\begin{align}\label{logpolid}
        \tau_h(\Omega)&=\frac{1}{2}\iint\limits_{\Om\;\Om}\log\frac{1}{[z,w]}u_1(z)u_1(w) \dz\dw\nonumber\\
        &\leq \frac{1}{2} \int\limits_{P_\H(\Om)} \int\limits_{P_\H(\Om)} \log\frac{1}{[z,w]}P_\H(u_1)(z)P_\H(u_1)(w)\dz\dw\leq \tau_h(P_\H(\Omega)).\nonumber
    \end{align} 
\noindent Next, assume that $\tau_h(\Omega)=\tau_h(P_\H(\Omega))$. Then, from the above inequality, we obtain
    \begin{equation*}
        \frac{1}{2}\iint\limits_{\Om\;\Om} \log\frac{1}{[z,w]}u_1(z)u_1(w)\dz\dw\\
        = \frac{1}{2}\int\limits_{P_\H(\Om)} \int\limits_{P_\H(\Om)}\log\frac{1}{[z,w]}P_\H(u_1)(z)P_\H(u_1)(w) \dz\dw.
    \end{equation*}
     Now, as a consequence of Proposition \ref{prop_riezpol}, we have  either
     \begin{equation}\label{two_possible_cases}
         P_\H(u_1)=u_1\text{ a.e.  in }\disk \text{ or } P_\H(u_1)=u_1\circ \sigma_\G\text{ a.e.  in }\disk.
     \end{equation}
    One can easily verify that,
    \begin{equation*}
        P_\H(u_1)\neq u_1\text{ on }P_\H(\Om)\,\triangle\, \Om,
    \end{equation*}
    and 
    \begin{equation*}
        P_\H(u_1)\neq u_1\circ\sigma_\G\text{ on }P_\H(\Om)\,\triangle\, \sigma_\G(\Om).
    \end{equation*}
     By \eqref{two_possible_cases}, we must have $|P_\H(\Om)\,\triangle\, \Om|=0$ or $|P_\H(\Om)\,\triangle\, \sigma_\G(\Om)|=0$. 
     %Thus, $P_\H(\Omega)\cong\Omega$ or $P_\H(\Omega)\cong\sigma_\G(\Omega)$.
    
\qed
% \begin{theorem}\label{efreptheorem1}
%     Let $\tau$ be an eigenvalue of $\Lom_h$ satisfying \eqref{evproblem}, and $f$ be an associated eigenfunction. Then, for each $z\in\com$ and each $0<r<1$, the following formula holds.
%     \begin{equation}\label{efrep1}
%         \tau u(z)=\frac{\tau}{2\pi} \int_0^{2\pi} u(\phi_z(re^{i\theta}))d\theta-\frac{1}{2\pi} \int_{\Om\cap \Delta_r(z)} \log \frac{[z,w]}{r}u(w)\dw,
%     \end{equation}
%     where $\Delta_r(z)=\{w:[z,w]<r\}=\{w:|\phi_z(w)|<r\}$.
% \end{theorem}
\hfill\\
\par Next, we derive an explicit representation for the eigenfunctions of \eqref{evproblem}, analogous to the representation for \eqref{oldevproblem} proved by Troutman in \cite[Lemma, p.~67]{troutman1967}.
\\
\hfill\\
\noindent\textbf{Proof of Theorem \ref{thm_efreptheorem1}:}
\\
    Let $u$ be an eigenfunction corresponding to a nonzero eigenvalue $\tau$. Then, by \eqref{ef_on_D}
    % \begin{equation}
    %     \tau u(z)=\frac{1}{2} \int_\Om \log\frac{1}{[z,w]}u(w)\dw, \quad \forall\, z\in \disk.
    % \end{equation}
    \begin{equation}
        -2 \tau u(z)=\int_\Om \log[z,w]u(w)\dw, \quad \forall\, z\in \disk.
    \end{equation}
    Fix $z\in\disk$ and $0<r<1-|z|$. Replacing $z$ by $\phi_z(re^{i\theta})$,
    \begin{align*}
        -2 \tau u(\phi_z(re^{i\theta}))&=\int_\Om \log[\phi_z(re^{i\theta}),w]u(w)\dw\\
        &= \int_\Om \log [re^{i\theta},\phi_z(w)]u(w)\dw,\qquad(\text{Proposition }\eqref{prop_mobius_preseve_dist})\\
        &= \int_\Om \log |\phi_{re^{i\theta}}(\phi_z(w))|u(w)\dw\\
        &=\int_\Om \log \left| \frac{\phi_z(w)-re^{i\theta}}{1-re^{i\theta }\ol{\phi_z(w)}} \right|u(w)\dw
    \end{align*}  
    Integrating both sides with respect to  $\theta$, and by applying Fubini's theorem, we obtain
    \begin{align}\label{e3_h}
        -2 \tau \int_0^{2\pi} u(\phi_z(re^{i\theta}))d\theta&= \int_\Om \int_0^{2\pi} \log \left| \phi_z(w)-re^{i\theta} \right|d\theta\, u(w)\dw\nonumber\\
        &\quad\quad - \int_\Om\int_0^{2\pi} \log \left| 1-re^{i\theta }\ol{\phi_z(w)} \right|d\theta \, u(w)\dw
    \end{align}
    Recall,
    \begin{equation*}
        \int_0^{2\pi} \log (|1-ae^{i\theta}|)d\theta =\begin{cases}
            0, & |a|\leq 1,\\
            2\pi \log |a|, &|a|>1.
        \end{cases}
    \end{equation*}
    Since $0<r<1$ and $\phi_z:\mathbb{D}\ra \mathbb{D}$, it follows that $$\int_0^{2\pi} \log \left| 1-re^{i\theta }\ol{\phi_z(w)} \right|d\theta=0,$$ 
    and hence the second integral vanishes. Next, consider
    \begin{align*}
        \int_0^{2\pi} \log \left| \phi_z(w)-re^{i\theta} \right|d\theta&=\int_0^{2\pi} \log |\phi_z(w)|d\theta+\int_0^{2\pi}\log \left| 1-\frac{re^{i\theta}}{\phi_z(w)} \right|d\theta\\
        &=\begin{cases}
            2\pi \log \left| \phi_z(w)\right|, &w\in \disk\setminus \Delta_r(z)\\
            2\pi \log \left| \phi_z(w)\right|-2\pi \log \left|\frac{\phi_z(w)}{r}\right|, & w\in \Delta_r(z)
        \end{cases}
    \end{align*}
    Substituting in \eqref{e3_h} gives
    \begin{align*}
        -2 \tau \int_0^{2\pi} u(\phi_z(re^{i\theta}))d\theta&=2\pi \int_{\Om} \log \left| \phi_z(w)\right|u(w)\dw - 2\pi\int_{\Omega\cap\Delta_r(z)} \log \left|\frac{\phi_z(w)}{r}\right| u(w)\dw,        
    \end{align*}
    and thus dividing by $-4\pi$, we get
    \begin{equation}
        \frac{\tau}{2\pi} \int_0^{2\pi} u(\phi_z(re^{i\theta}))d\theta =  \tau u(z)+\frac{1}{2}\int_{\Om\cap \Delta_r(z)} \log \frac{[z,w]}{r} u(w)\dw
    \end{equation}
    This completes the proof. In the case $\tau = 0$, the same argument applies; however, the corresponding eigenfunction cannot be extended to $\mathbb{D}$ as given in \eqref{ef_on_D}, and hence the representation is valid only inside $\Omega$.   
\qed

\noindent From this representation, we next establish that zero is not an eigenvalue of \eqref{evproblem}.
\\
\hfill\\
\noindent\textbf{Proof of Theorem \ref{cor_zero}:}
\\
Suppose $0$ is an eigenvalue and let $u$ be a corresponding eigenfunction.
% From the representation formula \eqref{efrep1_hyper},
% \begin{equation}
% \int_{\Omega \cap \Delta_r(z)} \log\frac{[z,w]}{r}\, u(w)\, \dw = 0,
% \qquad \forall\, z \in \disk,\ \forall\, r>0 .
% \end{equation}
Fix $z_0 \in \Omega$ and choose $r>0$ such that $\Delta_r(z_0)\subset \Omega$. Then from the representation formula \eqref{efrep1_hyper},
\begin{equation}\label{eq:rep-zero}
    \int_{\Delta_s(z_0)} \log\frac{[z_0,w]}{s}\, u(w)\, \dw = 0,
\qquad \forall\, 0<s\leq r .
\end{equation}
Define
\[
F_{z_0}(s):=\int_{\Delta_s(z_0)} \log\frac{[z_0,w]}{s}\, u(w)\, \dw,
\qquad 0< s< r .
\]
Let $0<s<t<r$. Then
\begin{align*}
F_{z_0}(t)-F_{z_0}(s)
&=
\int_{\Delta_t(z_0)} \log \frac{[z_0,w]}{t}\, u(w)\, \dw
-
\int_{\Delta_s(z_0)} \log \frac{[z_0,w]}{s}\, u(w)\, \dw \\
% &=
% \int_{\Delta_t(z_0)\setminus \Delta_s(z_0)}
% \log \frac{[z_0,w]}{t}\, u(w)\, \dw \\
% &\quad
% +
% \int_{\Delta_s(z_0)}
% \left(
% \log \frac{[z_0,w]}{t}
% -
% \log \frac{[z_0,w]}{s}
% \right) u(w)\, \dw \\
&=
\int_{\Delta_t(z_0)\setminus \Delta_s(z_0)}
\log \frac{[z_0,w]}{t}\, u(w)\, \dw
+
\int_{\Delta_s(z_0)} \log \frac{s}{t}\, u(w)\, \dw .
\end{align*}
Dividing by $t-s$, we obtain
\begin{align*}
\frac{F_{z_0}(t)-F_{z_0}(s)}{t-s}
&=
\frac{1}{t-s}
\int_{\Delta_t(z_0)\setminus \Delta_s(z_0)}
\log \frac{[z_0,w]}{t}\, u(w)\, \dw \\
&\quad
+
\frac{\log s - \log t}{t-s}
\int_{\Delta_s(z_0)} u(w)\, \dw .
\end{align*}
By using the Cauchy-Schwarz inequality,
\begin{align*}
\left|
\frac{1}{t-s}
\int_{\Delta_t(z_0)\setminus \Delta_s(z_0)}
\log \frac{[z_0,w]}{t}\, u(w)\, \dw
\right|
&\le
\frac{1}{t-s}
\left(
\int_{\Delta_t(z_0)\setminus \Delta_s(z_0)}
\left|\log \frac{[z_0,w]}{t}\right|^2 \dw
\right)^{1/2} \\
&\quad \times
\left(
\int_{\Delta_t(z_0)\setminus \Delta_s(z_0)}
|u(w)|^2 \dw
\right)^{1/2}\\
&\leq \frac{\|\Om\|\|u\|_{L^2(\Om)}}{t-s}(\log t - \log s)^2 ,
\end{align*}
where $\|\Om\|$ denotes the hyperbolic area measure of $\Om$. Moreover,
\[
\lim_{s \to t}
\frac{(\log t - \log s)^2}{t-s}
=
\lim_{s \to t}
\frac{2(\log t - \log s)}{s}
=0 .
\]
Hence,
\[
\lim_{s \to t}
\frac{F_{z_0}(t)-F_{z_0}(s)}{t-s}
=
-\frac{1}{t}
\int_{\Delta_t(z_0)} u(w)\, \dw .
\]
That is,
\[
F_{z_0}'(t)
=
-\frac{1}{t}
\int_{\Delta_t(z_0)} u(w)\, \dw ,
\qquad 0<t<r .
\]
However, $F_{z_0}(t)=0$ for all $0< t< r$. Consequently 
\[
F_{z_0}'(t)=0,
\quad \text{and}\quad 
\int_{\Delta_t(z_0)} u(w)\, \dw = 0
\quad 0<t<r .
\]
% By the Lebesgue differentiation theorem,
% \[
% u(z_0)=0
% \quad \text{for a.e. } z_0\in\Omega .
% \]
Recall that since $\Omega$ is bounded, there exists a constant $k<1$ such that $|z|\le k, \text{ for all } z\in\Omega,$ as in the proof of Proposition~\ref{prop_L_h_compact_sa}.  
Moreover, for $0<t<r$, the Lebesgue measure of $\Delta_t(z)$, denoted by  $|\Delta_t(z)|=\pi\,\frac{(1-|z|^2)^2\,t^2}{\bigl(1-|z|^2 t^2\bigr)^2},$ see \cite[Theorem~2.2.2]{Stoll_2016}. Since each hyperbolic disk $\Delta_t(z)\subset B(z,2t)$ and $$|\Delta_t(z)|\geq \frac{\pi t^2(1-k^2)^2}{4}\geq C |B(z,2t)|,$$
 by the Lebesgue differentiation theorem (see \cite[p.98]{folland1999}, \cite[p.106 \& p.108]{SteinShakarchi}), we have
\[
\frac{1}{\pi}\frac{u(z)}{(1-|z|^2)^2}=\lim_{t\to 0}\frac{1}{|\Delta_t(z)|}\int_{\Delta_t(z))}u(w)\,\frac{dA(w)}{\pi(1-|w|^2)^2}=0
\quad \text{for a.e. } z\in\Omega .
\]
By \eqref{eq:rep-zero}, the above integral vanishes for all sufficiently
small $t$, hence $u=0$ a.e. in $\Omega$, contradicting the assumption that $u$ is an eigenfunction. Thus, $0$ is not an eigenvalue. 
\qed
\vspace{2mm}
\\The following remark is required to prove Theorem \ref{thm_hyper_transfinite}.
\begin{remark}\label{rem_boundary} 
        Let $\Omega \subset \mathbb{D}$ be an open set, bounded with respect to the hyperbolic metric $d_h$. Let $u$ be an eigenfunction of $\Lom_h$ corresponding to the eigenvalue $\tau$. Notice that 
        \[
        -2\,\mathcal{L}_h u(z)
        = \int_{\Omega} \log [z,w]\, u(w)\, \dw
        = \int_{\Omega} \log \frac{[z,w]}{|z|}\, u(w)\, \dw
        + \log |z| \int_{\Omega} u(w)\, \dw .
        \]
        Moreover,
        \[
        \frac{[z,w]}{|z|}
        = \frac{|z-w|}{|z|\;|1-\overline{z}w|}
        = \frac{|z-w|}{|z - |z|^{2} w|}.
        \]
        As \(|z| \to 1\), we have \(\frac{[z,w]}{|z|} \to 1\). By the dominated convergence theorem,
        \[
        \int_{\Omega} \log \frac{[z,w]}{|z|}\, u(w)\, \dw \rightarrow 0,
        \]
        and the second term also converges to zero. Therefore, \(\mathcal{L}_hu(z) \to 0\) as $|z|\rightarrow 1$. Consequently, for an eigenfunction \(u\) corresponding to the eigenvalue \(\tau\),
        \[
        \tau\,u(z) \to 0 \quad \text{as } |z| \to 1.
        \]
        By Corollory \ref{cor_zero}, we have $\tau\neq 0$, and consequently 
        \[
        u(z) \to 0 \quad \text{as } |z| \to 1.
        \]
\end{remark}
In \cite[Theorem 3]{troutman1967}, it is shown that the logarithmic potential operator $\mathcal{L}$ admits a negative eigenvalue if and only if the transfinite diameter of $\Omega \subset \mathbb{R}^2$ exceeds 1. Consequently, $\mathcal{L}$ is a positive operator if and only if the transfinite diameter of $\Omega$ is less than or equal to 1. In the present setting, the hyperbolic logarithmic operator $\mathcal{L}_h$ is always positive, in contrast with the Euclidean case. We next prove Theorem~\ref{thm_hyper_transfinite}.
\\
\hfill\\
\noindent\textbf{Proof of Theorem \ref{thm_hyper_transfinite}:}
\par Suppose, to the contrary, that $\mathcal{L}_h$ is not a positive operator. Then there exists $f \in L^2(\Omega)$ such that $\lb \Lom_h f, f\rb<0$. Consequently, 
\begin{equation}\label{weightedcharmu1_neg}
\widetilde{\tau_h}(\Omega)
:= \inf \left\{
\frac{1}{2}
\iint\limits_{\Om\;\Om}
\log \frac{1}{[z,w]}  u(z) u(w) \dz \dw
: u \in L^2(\Omega),\ \int_\Omega u^2 = 1
\right\}<0.
\end{equation}
%However, we have already established that the eigenfunction associated with $\tau_h(\Omega)$ is positive in $\Omega$. This leads to a contradiction, since eigenfunctions corresponding to distinct eigenvalues of a self-adjoint operator must be orthogonal. 
Then, it follows that $\widetilde{\tau_h}(\Omega)$ is an eigenvalue of $\mathcal{L}_h$\cite[Proposition 8.5.2]{Keshfunctional2022}. Next, we show that any eigenfunction $u$ corresponding to $\widetilde{\tau_h}(\Omega)$, if it exists, must be identically zero. Let $u$ be an eigenfunction corresponding to $\widetilde{\tau_h}(\Omega)$.
Suppose that $u<0$ at some point. By Remark~\ref{rem_boundary}, $u$ must
attain a strictly negative minimum in $\mathbb{D}$, say at $z_{0}$.
Consider a sufficiently small neighbourhood
$\Delta_{r}(z_{0}) \subset \mathbb{D}$ such that, for all
$w \in \Delta_{r}(z_{0})$,
\begin{equation}
u(z_{0}) \leq u(w) < 0.
\end{equation}
In particular, for
\begin{equation}
w = \phi_{z_{0}}(s e^{i\theta}), \qquad s<r,
\end{equation}
we have
\begin{equation}
[z_{0},w]
= [\phi_{z_{0}}(z_{0}),\phi_{z_{0}}(w)]
= [0,s e^{i\theta}]
= |s e^{i\theta}| = s < r,
\end{equation}
and hence $w\in \Delta_{r}(z_{0})$. Therefore,
\begin{equation}\label{e1_h}
u(z_{0})
\leq \frac{1}{2\pi} \int_{0}^{2\pi}
u\!\left(\phi_{z_{0}}(s e^{i\theta})\right)\, d\theta .
\end{equation}
From \eqref{efrep1_hyper}, we obtain
\begin{equation}
u(z_{0})
= \frac{1}{2\pi} \int_{0}^{2\pi}
u\!\left(\phi_{z_{0}}(s e^{i\theta})\right)\, d\theta
- \frac{1}{2\widetilde{\tau_h}(\Omega)}
\int_{\Omega \cap \Delta_{s}(z_{0})}
\log\frac{[z_{0},w]}{s} u(w)\, \dw .
\end{equation}
Since $\widetilde{\tau_h}(\Omega)<0$, and
\begin{equation}
    \log\frac{[z_{0},w]}{s}<0, \text{ and } u(w)<0 \text{ for all } w \in \Delta_{s}(z_{0}), 
\end{equation}
it follows that
\begin{equation}\label{e2_h}
u(z_{0})
\geq \frac{1}{2\pi} \int_{0}^{2\pi}
u\!\left(\phi_{z_{0}}(s e^{i\theta})\right)\, d\theta .
\end{equation}
Combining \eqref{e1_h} and \eqref{e2_h}, we conclude that
\begin{equation}\label{lastthm_last eqn}
u(z_{0})
= \frac{1}{2\pi} \int_{0}^{2\pi}
u\!\left(\phi_{z_{0}}(s e^{i\theta})\right)\, d\theta .
\end{equation}
Substituting this back into \eqref{efrep1_hyper}, we further obtain
\begin{equation}
\int_{\Omega \cap \Delta_{s}(z_{0})}
\log\frac{[z_{0},w]}{s} u(w)\, \dw = 0,
\qquad s<r.
\end{equation}
This leads to a contradiction, since
$\log\frac{[z_{0},w]}{s}<0$ and $u(w)<0$ in
$\Delta_{s}(z_{0})$. An analogous argument yields a contradiction when $u>0$ at some point in $\Om$; by Remark~\ref{rem_boundary},
$u$ attains a strictly positive maximum in $\mathbb{D}$, say at $z_{1}$.
Choose $\tilde{r}>0$ sufficiently small so that $\triangle_{\tilde{r}}(z_{1}) \subset \mathbb{D}$
and
\begin{equation}
u(z_{1}) \geq u(w) > 0 \qquad \text{for all } w \in \triangle_{\tilde{r}}(z_{1}).
\end{equation}
In this case, the inequalities in \eqref{e1_h} and \eqref{e2_h} are reversed. Neverthless, the arguments from \eqref{lastthm_last eqn} onwards remain the same and yield a similar contradiction.
%Hence $u \geq 0$ in $\mathbb{D}$. However, this contradicts the orthogonality of $u$ and $u_{1}$, where $u_{1}>0$ is the eigenfunction corresponding to $\tau_{1}(\Omega)$. 
Thus, $u\equiv 0$ and hence $\widetilde{\tau_h}(\Omega)$ cannot be an eigenvalue of $\mathcal{L}_h$. Therefore, $\mathcal{L}_{h}$ must be a positive operator on $L^2(\Om)$.

\qed
\vspace{5mm}
\noindent{\textbf{Acknowledgment}}
\par The authors express their sincere gratitude to Dr. Anoop T. V. for helpful discussions related to this manuscript.
S. Verma acknowledges the project grant provided by CSIR-ASPIRE sanction order no. 25WS(011)/2023-24/EMR-II/ASPIRE.
\bibliographystyle{abbrvurl}
 \bibliography{Reference}

\end{document}